\newtheorem{theorem}{Theorem}
\newtheorem{corollary}[theorem]{Corollary}
\newtheorem{example}[theorem]{Example}
\newtheorem{lemma}[theorem]{Lemma}
\newtheorem{proposition}[theorem]{Proposition}
\newenvironment{proof}[1][Proof]{\textbf{#1.} }{\
\rule{0.5em}{0.5em}}
\title{Solutions of nonlinear PDEs in the completion of uniform convergence spaces}
\author{Jan Harm van der Walt\\Department of Mathematics and Applied
Mathematics\\University of Pretoria, South Africa}
\date{}
\begin{document}

\maketitle

\begin{abstract}
This paper deals with the solution of large classes of systems of
nonlinear partial differential equations (PDEs) in spaces of
generalized functions that are constructed as the completion of
uniform convergence spaces. The existence result for the mentioned
systems of equations are obtained as an application of a basic
approximation result, which is formulated entirely in terms of
usual real valued functions on open subsets of Euclidean
$n$-space.  The structure and regularity properties of the
solutions are explained at the hand of suitable results relating
to the structure of the completion of uniform convergence spaces
that are defined as initial structures.  In this regard, we
include also a detailed discussion of the completion of initial
uniform convergence spaces in general.
\end{abstract}

\section{Introduction}

Uniform spaces, and more generally uniform convergence spaces,
appear in many important applications of topology, and in
particular analysis. In this regard, the concepts of completeness
and completion of a uniform convergence space play a central role.
Indeed, Baire's celebrated Category Theorem asserts that a
\textit{complete} metric space cannot be expressed as the union of
a countable family of closed nowhere dense sets. The importance of
this result is demonstrated by the fact that the Banach-Steinhauss
Theorem, as well as the Closed Graph Theorem in Banach spaces
follow from it.

However, in many situations one deals with a space $X$ which is
\textit{incomplete}, and in these cases one may want to construct
the \textit{completion} of $X$.  In this regard, the main result,
see for instance \cite{Gahler 2} and \cite{Wyler}, is that every
Hausdorff uniform convergence space $X$ may be uniformly
continuously embedded into a \textit{complete}, Hausdorff uniform
convergence space $X^{\sharp}$ in such a way that the image of $X$
in $X^{\sharp}$ is dense. Moreover, the following
\textit{universal property} is satisfied. For every complete,
Hausdorff uniform convergence space $Y$, and any uniformly
continuous mapping
\begin{eqnarray}
\varphi:X\rightarrow Y\nonumber
\end{eqnarray}
the diagram
\begin{eqnarray}
\begin{array}{c}
\setlength{\unitlength}{1cm} \thicklines
%\begin{pspicture}(13,6)
\begin{picture}(13,6)

\put(1.4,4.9){$X$} \put(2.0,5.0){\vector(1,0){7.4}}
\put(9.6,4.9){$Y$} \put(5.8,5.4){$\varphi$}
\put(1.7,4.6){\vector(1,-1){3.5}} \put(6.0,1.1){\vector(1,1){3.5}}
\put(5.4,0.6){$X^{\sharp}$} \put(3.1,2.5){$\iota_{X}$}
\put(8.0,2.5){$\exists !\varphi^{\sharp}$}

\end{picture}
%\end{pspicture}
\end{array}\label{Univeral}
\end{eqnarray}
commutes, with $\varphi^{\sharp}$ uniformly continuous, and
$\iota_{X}$ the canonical embedding of $X$ into its completion
$X^{\sharp}$.

It is often not only the completion $X^{\sharp}$ of a uniform
convergence space $X$ that is of interest, but also the extension
$\varphi^{\sharp}$ of uniformly continuous mappings from $X$ to
$X^{\sharp}$. In this regard, we may recall that one of the major
applications of uniform spaces, and recently, see \cite{vdWalt2},
\cite{vdWalt6} and \cite{vdWalt3}, also uniform convergence
spaces, is to the solutions of PDEs. Indeed, let us consider a PDE
\begin{eqnarray}
Tu=f\label{PDE}
\end{eqnarray}
with $T$ a possibly nonlinear partial differential operator which
acts on some relatively small space $X$ of classical functions,
$u$ the unknown function, while the right hand term $f$ belongs to
some space $Y$. One usually considers some uniformities, or more
generally uniform convergence structures, on $X$ and $Y$ in such a
way that the mapping
\begin{eqnarray}
T:X\rightarrow Y\label{PDEOp}
\end{eqnarray}
is uniformly continuous.  It is well known that the equation
(\ref{PDE}) can have solutions of \textit{physical interest}
which, however, may fail to be \textit{classical}, in the sense
that they do not belong to $X$. From here, therefore, the
particular interest in \textit{generalized solutions} to
(\ref{PDE}).  Such generalized solutions to (\ref{PDE}) may be
obtained by constructing the completions $X^{\sharp}$ and
$Y^{\sharp}$ of $X$ and $Y$, respectively.  The mapping
(\ref{PDEOp}) extends uniquely to a mapping
\begin{eqnarray}
T^{\sharp}:X^{\sharp}\rightarrow Y^{\sharp} \label{ExtPDEOp}
\end{eqnarray}
so that the diagram
\begin{eqnarray}
\begin{array}{c}
\setlength{\unitlength}{1cm} \thicklines
%\begin{pspicture}(13,6)
\begin{picture}(13,6)

\put(1.9,4.9){$X$} \put(2.3,5.0){\vector(1,0){6.0}}
\put(8.5,4.9){$Y$} \put(4.9,5.2){$T$}
\put(2.0,4.7){\vector(0,-1){3.5}} \put(2.4,0.9){\vector(1,0){6.0}}
\put(1.8,0.8){$X^{\sharp}$} \put(8.5,0.8){$Y^{\sharp}$}
\put(1.4,2.9){$\iota_{X}$} \put(8.8,2.9){$\iota_{Y}$}
\put(8.6,4.7){\vector(0,-1){3.5}} \put(4.9,1.1){$T^{\sharp}$}

\end{picture}
%\end{pspicture}
\end{array}\label{PDEDiagram}
\end{eqnarray}
commutes.  In view of the diagram (\ref{PDEDiagram}), one may
consider the \textit{extended} equation
\begin{eqnarray}
T^{\sharp}u^{\sharp}=f\label{ExtPDE}
\end{eqnarray}
as a generalization of the original equation (\ref{PDE}).  That
is, the generalized solutions of (\ref{PDE}) are the solutions of
(\ref{ExtPDE}). Note that the \textit{existence} of generalized
solutions depends on the properties of the mapping $T^{\sharp}$
and the uniform convergence structure on $X^{\sharp}$ and
$Y^{\sharp}$, as opposed to the \textit{structure} and
\textit{regularity} of the generalized solutions, which may be
interpreted as the extent to which a generalized solution exhibits
characteristics of classical solutions, which depends on the
properties of the space $X^{\sharp}$ and its elements.  It is
therefore clear that not only the \textit{completion} $X^{\sharp}$
of a uniform convergence space $X$ and its elements, but also the
the associated \textit{extensions} of uniformly continuous
mappings, defined on $X$, are of interest.

The example given above indicates a particular point of interest.
The uniform convergence structure $\mathcal{J}_{X}$ on the domain
$X$ of the PDE operator $T$ is usually defined as the
\textit{initial} uniform convergence structure \cite{Beattie} with
respect to some uniform convergence structure $\mathcal{J}_{Y}$ on
$Y$, and a family of mappings
\begin{eqnarray}
\left(\psi_{i}:X\rightarrow Y\right)_{i\in I}\nonumber
\end{eqnarray}
In the case of PDEs, the mappings $\psi_{i}$ are typically usual
partial differential operators, up to a given order $m$.  A
natural question arises as to the connection between the
completion of $X$, and the completion of $Y$.  More generally,
consider a set $X$ and a family of mappings
\begin{eqnarray}
\left(\psi_{i}:X\rightarrow X_{i}\right)_{i\in I}\nonumber
\end{eqnarray}
where each $X_{i}$ is a uniform convergence space.  If the family
$\left(\psi_{i}\right)_{i\in I}$ separates the points of $X$, then
the initial uniform convergence structure on $X$ with respect to
this family of mappings is also Hausdorff, and we may consider its
completion $X^{\sharp}$.  It appears that the issue of the
possible connections between the completions of $X$ and those of
the $X_{i}$, respectively, has not yet been fully explored. We aim
to clarify the connection between the completion $X^{\sharp}$ of
$X$, and the completions $X_{i}^{\sharp}$ of the $X_{i}$.

Regarding the above example concerning possibly nonlinear PDEs, we
note that the uniform structure on the target space $Y$ is usually
induced by some locally convex linear space topology on $Y$, while
the initial uniform structure on $X$ is defined in terms of usual
\textit{linear} partial differential operator.  Indeed, the
Sobolev space $H^{1}\left(\Omega\right)$, for instance, may be
defined as the \textit{completion} of the initial uniform
structure on $\mathcal{C}^{1}\left(\Omega\right)$ with respect to
the family of mappings
\begin{eqnarray}
\left(D^{\alpha}:\mathcal{C}^{1}\rightarrow
\mathcal{L}_{2}\left(\Omega\right)\right)_{|\alpha|\leq
1}\nonumber
\end{eqnarray}
These methods, however, fail to deliver the existence of
generalized solutions to any significantly general class of PDEs,
particularly in the nonlinear case.  This is not due to any
conceptual obstacles, and even less so to the limitations of
mathematics as such, but rather to the inherent limitations of the
linear function analytic methods themselves.

Indeed, the general and type independent theory
\cite{Oberguggenberger} for the existence of solutions to
nonlinear PDEs delivers generalized solutions to very large
classes of equations as elements of the Dedekind completion of
suitably constructed partially ordered sets.  What is more, one
obtains a \textit{blanket regularity} for these generalized
solutions, as they may be assimilated with Hausdorff continuous,
interval valued functions \cite{Anguelov and Rosinger 1}.  As an
application of the results on the completion of uniform
convergence spaces, we present a significant enrichment of the
basic theory of Order Completion \cite{Oberguggenberger}. In this
regard, we obtain the \textit{existence} and \textit{uniqueness}
of generalized solutions of $\mathcal{C}^{k}$-smooth systems of
nonlinear PDEs, which may be assimilated with functions which are
$\mathcal{C}^{k}$-smooth everywhere except on some closed nowhere
dense set.

The paper is organized as follows.  In Section 2 we discuss the
structure of the completion $Y^{\sharp}$ of a subspace $Y$ of a
uniform convergence structure $X$ relative to that of the
completion $X^{\sharp}$ of $X$. Section 3 addresses the structure
of the completion of the Cartesian product of a family of uniform
convergence spaces. In particular, we show that the Wyler
completion preserves Cartesian products. As an application of the
results on subspaces and products of uniform convergence spaces,
we investigate the structure of the completion of the initial
uniform convergence structure on a set $X$, with respect to
uniform convergence structures $\mathcal{J}_{X_{i}}$ on sets
$X_{i}$, and a family of mappings
\begin{eqnarray}
\left(\psi_{i}:X\rightarrow X_{i}\right)_{i\in I}\nonumber
\end{eqnarray}
in Section 4.  In the context of nonlinear PDEs, as explained
above, the results we obtain in this regard may be considered as a
regularity result. In Section 5 we apply the results of the
preceding sections to systems of nonlinear PDEs.

\section{Subspaces and Embeddings}

It can easily be shown that the Bourbaki completion of a uniform
space $X$ preserves subspaces.  In particular, the completion
$Y^{\sharp}$ of any subspace of $X$ is isomorphic to a subspace of
the completion $X^{\sharp}$ of $X$.  For uniform convergence
spaces in general, and the associated Wyler completion
\cite{Wyler}, this is not the case. In this regard, consider the
following\footnote{This example was communicated to the author by
Prof. H. P. Butzmann}.
\begin{example}\label{QRExample}
Consider the real line $\mathbb{R}$ equipped with the uniform
convergence structure associated with the usual uniformity.  Also
consider the set $\mathbb{Q}$ of rational numbers equipped with
the subspace uniform convergence structure induced from
$\mathbb{R}$. The Wyler completion $\mathbb{Q}^{\sharp}$ of
$\mathbb{Q}$ is the set $\mathbb{R}$ equipped with a suitable
uniform convergence structure.  As such, the inclusion mapping
$i:\mathbb{Q} \rightarrow \mathbb{R}$ extends to a uniformly
continuous bijection
\begin{eqnarray}
i^{\sharp}:\mathbb{Q}^{\sharp} \rightarrow
\mathbb{R}\label{QSharpMap}
\end{eqnarray}
Furthermore, a filter $\mathcal{F}$ on $\mathbb{Q} ^{\sharp}$
converges to $x^{\sharp}$ if and only if
\begin{eqnarray}
\mathcal{V}\left(x^{\sharp}\right)_{|\mathbb{Q}} \cap
[x^{\sharp}]\subseteq \mathcal{F}
\end{eqnarray}
where $\mathcal{V}\left(x^{\sharp}\right)$ denotes the
neighborhood filter at $x^{\sharp}$ in $\mathbb{R}$.  As such, it
is clear that the neighborhood filter at $x^{\sharp}$ does not
converge in $\mathbb{Q}^{\sharp}$.  Therefore the mapping
(\ref{QSharpMap}) does not have a continuous inverse, so that it
is not an embedding.
\end{example}

In view of Example \ref{QRExample}, it is clear that Wyler
completion does not preserve subspaces.  Indeed, even in case the
uniform convergence structure is induced by a uniformity, the
completion of a subspace of a uniform convergence space $X$ will
in general not be a subspace of the completion $X^{\sharp}$.
Before we proceed to establish with our investigation of the
completion of a subspace $Y$ of a uniform convergence structure
$X$, we note that the Wyler completion is minimal among complete
uniform convergence spaces containing a given uniform convergence
space $X$ as a dense subspace.  Indeed, this is an easy
consequence of the universal property (\ref{Univeral}).
\begin{proposition}\label{WylerMinimal}
Consider a Hausdorff uniform convergence space $X$.  For any
complete, Hausdorff uniform convergence space $X^{\sharp}_{0}$
that contains $X$ a dense subspace, there is a bijective and
uniformly continuous mapping
\begin{eqnarray}
\iota_{X,0}^{\sharp}:X^{\sharp}\rightarrow X^{\sharp}_{0}.
\end{eqnarray}
\end{proposition}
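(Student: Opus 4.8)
The plan is to obtain the map $\iota_{X,0}^{\sharp}$ from the universal property (\ref{Univeral}), and then to verify bijectivity by a direct argument with Cauchy filters. First I would observe that, since $X$ carries the subspace uniform convergence structure inherited from $X_{0}^{\sharp}$, the inclusion $j:X\rightarrow X_{0}^{\sharp}$ is uniformly continuous. As $X_{0}^{\sharp}$ is complete and Hausdorff, the universal property (\ref{Univeral}) applies with $Y=X_{0}^{\sharp}$ and $\varphi=j$, yielding a unique uniformly continuous mapping $\iota_{X,0}^{\sharp}:X^{\sharp}\rightarrow X_{0}^{\sharp}$ satisfying $\iota_{X,0}^{\sharp}\circ\iota_{X}=j$. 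This settles existence and uniform continuity; it remains only to prove that $\iota_{X,0}^{\sharp}$ is a bijection.

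For surjectivity I would use the density of $X$ in $X_{0}^{\sharp}$ together with the completeness of $X^{\sharp}$. Given $z\in X_{0}^{\sharp}$, density provides a filter $\mathcal{F}$ on $X_{0}^{\sharp}$ that converges to $z$ and admits a base of subsets of $X$. Its trace on $X$ is then a Cauchy filter in $X$, which must converge in the complete space $X^{\sharp}$ to some point $x^{\sharp}$. Applying $\iota_{X,0}^{\sharp}$ and using uniform continuity, the image filter converges both to $\iota_{X,0}^{\sharp}(x^{\sharp})$ and, being essentially $\mathcal{F}$ again, to $z$; the Hausdorff property of $X_{0}^{\sharp}$ then forces $\iota_{X,0}^{\sharp}(x^{\sharp})=z$.

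For injectivity, suppose $\iota_{X,0}^{\sharp}(x^{\sharp})=\iota_{X,0}^{\sharp}(y^{\sharp})=z$. Choosing filters $\mathcal{F}$ and $\mathcal{G}$ on $X$ with $\iota_{X}(\mathcal{F})\rightarrow x^{\sharp}$ and $\iota_{X}(\mathcal{G})\rightarrow y^{\sharp}$ in $X^{\sharp}$, their images $j(\mathcal{F})$ and $j(\mathcal{G})$ both converge to $z$ in $X_{0}^{\sharp}$. The convergence space axioms then give that $j(\mathcal{F})\cap j(\mathcal{G})=j(\mathcal{F}\cap\mathcal{G})$ also converges to $z$, so that $\mathcal{F}\cap\mathcal{G}$ is Cauchy in $X$ and hence converges in $X^{\sharp}$. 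Since a filter finer than a convergent one shares its limit, and $X^{\sharp}$ is Hausdorff, both $x^{\sharp}$ and $y^{\sharp}$ must coincide with this common limit, giving $x^{\sharp}=y^{\sharp}$.

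The main obstacle is the injectivity step: unlike the existence of the map, which is a purely categorical consequence of (\ref{Univeral}), injectivity genuinely uses the description of the points of the Wyler completion via (equivalence classes of) Cauchy filters and the convergence space axioms governing intersections of co-convergent filters. The surjectivity argument is of a similar flavour but more routine. A subtlety to handle with care throughout is the interplay between filters on $X$ and their images under the embeddings $\iota_{X}$ and $j$, together with the fact that traces on $X$ of Cauchy filters in $X_{0}^{\sharp}$ remain Cauchy precisely because $X$ carries the subspace structure.
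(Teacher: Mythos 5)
Your proposal is correct and follows essentially the same route as the paper: the map is obtained from the universal property applied to the uniformly continuous inclusion $X\rightarrow X_{0}^{\sharp}$, surjectivity comes from density of $X$ in $X_{0}^{\sharp}$ plus completeness of $X^{\sharp}$ (a Cauchy trace on $X$ converging in both completions), and injectivity from the fact that the intersection of two filters co-convergent in $X_{0}^{\sharp}$ is Cauchy on $X$, forcing the two limits in the Hausdorff space $X^{\sharp}$ to coincide. The only cosmetic difference is that you spell out the finer-filter and Hausdorff steps that the paper leaves implicit.
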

\begin{proof}
Consider the inclusion mapping
\begin{eqnarray}
i_{X,0}:X\rightarrow X^{\sharp}_{0},\label{Incl}
\end{eqnarray}
which is clearly a uniformly continuous embedding.  As such, there
is a unique uniformly continuous mapping
\begin{eqnarray}
\iota_{X,0}^{\sharp}:X^{\sharp}\rightarrow
X^{\sharp}_{0}\label{ExtIncl}
\end{eqnarray}
which extends the mapping (\ref{Incl}).  We show that
(\ref{ExtIncl}) is a bijection.  In this regard, consider any
point $x^{\sharp}_{0}\in X^{\sharp}_{0}$.  Since $X$ is dense in
$X^{\sharp}_{0}$, there is some Cauchy filter $\mathcal{F}$ on $X$
such that $[\mathcal{F}]_{X^{\sharp}_{0}}$ converges to
$x^{\sharp}_{0}$ in $X^{\sharp}_{0}$.  Furthermore, there is some
$x^{\sharp}\in X^{\sharp}$ so that $[\mathcal{F}]_{X^{\sharp}}$
converges to $x^{\sharp}$ in $X^{\sharp}$.  Since
\begin{eqnarray}
i_{X,0}^{\sharp}\left([\mathcal{F}]_{X^{\sharp}}\right)=
i_{X,0}\left(\mathcal{F}\right) =
[\mathcal{F}]_{X^{\sharp}_{0}}\label{FEq}
\end{eqnarray}
it follows that $i_{X,0}^{\sharp}\left(x^{\sharp}\right) =
x^{\sharp}_{0}$.  Hence (\ref{ExtIncl}) is surjective.\\
To see that (\ref{ExtIncl}) is injective, consider  any
$x^{\sharp},y^{\sharp}\in X^{\sharp}$ and Cauchy filters
$\mathcal{F}$ and $\mathcal{G}$ on $X$ which converge to
$x^{\sharp}$ and $y^{\sharp}$, respectively, in $X^{\sharp}$.
Suppose that
\begin{eqnarray}
i_{X,0}^{\sharp}\left(x^{\sharp}\right)=i_{X,0}^{\sharp}\left(y^{\sharp}\right)
= z_{0}^{\sharp}
\end{eqnarray}
for some $z_{0}^{\sharp}\in X_{0}^{\sharp}$.  It now follows by
(\ref{FEq}) that $[\mathcal{F}]_{X^{\sharp}_{0}}$ and
$[\mathcal{G}]_{X^{\sharp}_{0}}$ both converge to $z_{0}^{\sharp}$
in $X_{0}^{\sharp}$.  As such, $[\mathcal{F}]_{X^{\sharp}_{0}}\cap
[\mathcal{G}]_{X^{\sharp}_{0}}$ converges to $z_{0}^{\sharp}$ so
that $\mathcal{F}\cap \mathcal{G}$ is a Cauchy filter on $X$. This
shows that $x_{0}^{\sharp}=y_{0}^{\sharp}$ so that (\ref{ExtIncl})
is injective.  This completes the proof.
\end{proof}\\ \\
The main result of this section is the following.
\begin{proposition}\label{SubspUCSComp}
Let $Y$ be a subspace of the uniform convergence space $X$.  Then
there is an injective, uniformly continuous mapping
\begin{eqnarray}
i^{\sharp}:Y^{\sharp} \rightarrow X^{\sharp}
\end{eqnarray}
which extends the inclusion mapping $i:Y\rightarrow X$.  In
particular,
\begin{eqnarray}
i^{\sharp}\left(Y^{\sharp}\right) =
a_{X^{\sharp}}\left(\iota_{X}\left(Y\right)\right).
\end{eqnarray}
Furthermore, the uniform convergence structure on $Y^{\sharp}$ is
the smallest complete, Hausdorff uniform convergence structure on
$a_{X^{\sharp}}\left(Y\right)$, with respect to inclusion, so that
$Y$ is contained as a dense subspace.
\end{proposition}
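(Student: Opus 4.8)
The plan is to prove this in three stages corresponding to the three assertions of the proposition. The backbone is the universal property (\ref{Univeral}) together with Proposition \ref{WylerMinimal}, which already establishes minimality of the Wyler completion among complete Hausdorff spaces containing a given space densely.

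First I would construct the map $i^{\sharp}$ and prove it is well defined and injective. The inclusion $i:Y\rightarrow X$ is uniformly continuous, and composing with the canonical embedding gives a uniformly continuous map $\iota_{X}\circ i:Y\rightarrow X^{\sharp}$. Since $X^{\sharp}$ is complete and Hausdorff, the universal property (\ref{Univeral}) yields a unique uniformly continuous extension $i^{\sharp}:Y^{\sharp}\rightarrow X^{\sharp}$. For injectivity I would argue as in Proposition \ref{WylerMinimal}: take $x^{\sharp},y^{\sharp}\in Y^{\sharp}$ represented by Cauchy filters $\mathcal{F},\mathcal{G}$ on $Y$, and suppose $i^{\sharp}(x^{\sharp})=i^{\sharp}(y^{\sharp})$. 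Then the images of $\mathcal{F}$ and $\mathcal{G}$ in $X^{\sharp}$ converge to a common point, so $\mathcal{F}\cap\mathcal{G}$ is a Cauchy filter in $X$. The key subtlety here is that $Y$ carries the \emph{subspace} uniform convergence structure induced from $X$, so a filter on $Y$ is Cauchy in $Y$ precisely when its image is Cauchy in $X$; this lets me conclude $\mathcal{F}\cap\mathcal{G}$ is Cauchy in $Y$ as well, forcing $x^{\sharp}=y^{\sharp}$.

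Second, for the image identity $i^{\sharp}(Y^{\sharp})=a_{X^{\sharp}}(\iota_{X}(Y))$, I would use a double inclusion. Every element of $Y^{\sharp}$ is the limit of $[\mathcal{F}]_{Y^{\sharp}}$ for some Cauchy filter $\mathcal{F}$ on $Y$; its image $i^{\sharp}(x^{\sharp})$ is the limit of $[\iota_{X}(i(\mathcal{F}))]$ in $X^{\sharp}$, which lies in the adherence (closure) of $\iota_{X}(Y)$, giving $i^{\sharp}(Y^{\sharp})\subseteq a_{X^{\sharp}}(\iota_{X}(Y))$. Conversely, any point in $a_{X^{\sharp}}(\iota_{X}(Y))$ is a limit of a filter coming from $Y$, and since that filter is Cauchy in $X^{\sharp}$ its trace is Cauchy in $Y$ (again using the subspace structure) and hence represents a point of $Y^{\sharp}$ mapping onto it. This shows the reverse inclusion and completes the identification of the image.

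Finally, for the minimality assertion, I would exhibit $i^{\sharp}(Y^{\sharp})=a_{X^{\sharp}}(\iota_{X}(Y))$ as a complete Hausdorff uniform convergence space containing $Y$ as a dense subspace, and then invoke Proposition \ref{WylerMinimal} in the form that any other such completion admits a bijective uniformly continuous map from $Y^{\sharp}$. Denseness of $Y$ in the adherence is immediate from the definition of adherence, completeness must be checked by showing that every Cauchy filter on the adherence converges within it, and Hausdorffness is inherited as a subspace of the Hausdorff space $X^{\sharp}$. I expect the main obstacle to be the completeness verification: unlike the subspace case for uniformities, one must confirm that the uniform convergence structure $i^{\sharp}$ transports onto the adherence coincides with the \emph{smallest} such complete structure, which requires comparing the transported structure against an arbitrary candidate and invoking the uniqueness in the universal property to rule out any strictly coarser complete Hausdorff structure. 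The comparison with Example \ref{QRExample} signals exactly this delicacy, since there the completion of the subspace carries a structure strictly finer than the subspace structure inherited from the ambient completion.
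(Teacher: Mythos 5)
Your proposal is correct and follows essentially the same route as the paper: construct $i^{\sharp}$ from the universal property, prove injectivity by intersecting the two Cauchy filters and transferring Cauchyness back to $Y$ via the subspace structure (the paper phrases this through the pulled-back filter $i^{-1}\left(\iota_{X}^{-1}\left(\mathcal{H}\right)\right)$ and Hausdorffness of $Y^{\sharp}$, while you use the equivalence-class characterization as in Proposition \ref{WylerMinimal}, but the substance is identical), establish the image identity by the same trace argument, and deduce minimality from Proposition \ref{WylerMinimal}. Your stage three is more laborious than the paper's one-line appeal to Proposition \ref{WylerMinimal} (completeness of the transported structure on $a_{X^{\sharp}}\left(\iota_{X}\left(Y\right)\right)$ is automatic, being a bijective copy of $Y^{\sharp}$), but this is a matter of economy, not a gap.
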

\begin{proof}
In view of the fact that the inclusion mapping $i:Y\rightarrow X$
is a uniformly continuous embedding, we obtain a uniformly
continuous mapping
\begin{eqnarray}
i^{\sharp}:Y^{\sharp}\rightarrow X^{\sharp}\label{IncExt}
\end{eqnarray}
so that the diagram
\begin{eqnarray}
\begin{array}{c}
\setlength{\unitlength}{1cm} \thicklines
%\begin{pspicture}(13,6)
\begin{picture}(13,6)

\put(2.9,4.9){$Y$} \put(3.5,5.0){\vector(1,0){6.4}}
\put(10.3,4.9){$X$} \put(6.3,5.2){$i$}
\put(3.0,4.7){\vector(0,-1){3.5}} \put(3.5,0.7){\vector(1,0){6.4}}
\put(2.9,0.6){$Y^{\sharp}$} \put(10.1,0.6){$X^{\sharp}$}
\put(2.3,2.9){$\iota_{Y}$} \put(10.5,2.9){$\iota_{X}$}
\put(10.2,4.7){\vector(0,-1){3.5}} \put(6.3,0.9){$i^{\sharp}$}

\end{picture}
%\end{pspicture}
\end{array}\label{IncDiagram}
\end{eqnarray}
commutes.  To see that the mapping (\ref{IncExt}) is injective,
consider any $y^{\sharp}_{0},y^{\sharp}_{1}\in Y^{\sharp}$ and
suppose that
\begin{eqnarray}
i^{\sharp}\left(y^{\sharp}_{0}\right)=i^{\sharp}\left(y^{\sharp}_{1}\right)=x^{\sharp}
\label{ConAss}
\end{eqnarray}
for some $x^{\sharp}\in X^{\sharp}$.  Since
$\iota_{Y}\left(Y\right)$ is dense in $Y^{\sharp}$ there exists
Cauchy filters $\mathcal{F}$ and $\mathcal{G}$ on $Y$ such that
$\iota_{Y}\left(\mathcal{F}\right)$ converges to $y^{\sharp}_{0}$
and $\iota_{Y}\left(\mathcal{G}\right)$ converges to
$y^{\sharp}_{1}$. From the diagram above it follows that
$\iota_{X}\left(i\left(\mathcal{F}\right)\right)$ and
$\iota_{X}\left(i\left(\mathcal{G}\right)\right)$ converges to
$x^{\sharp}$.  Therefore the filter
\begin{eqnarray}
\mathcal{H}=\iota_{X}\left(i\left(\mathcal{F}\right)\right)\cap
\iota_{X}\left(i\left(\mathcal{G}\right)\right)\nonumber
\end{eqnarray}
converges to $x^{\sharp}$ in $X^{\sharp}$.  Note that the filter
\begin{eqnarray}
i^{-1}\left(\iota_{X}^{-1}\left(\mathcal{H}\right)\right)\nonumber
\end{eqnarray}
is a Cauchy filter on $Y$ so that $\iota_{Y}\left(
i^{-1}\left(\iota_{X}^{-1}\left(\mathcal{H}\right)\right)\right)$
must converge in $Y^{\sharp}$ to some $y^{\sharp}$.  But
$\iota_{Y}\left( i^{-1}\left(\iota_{X}^{-1}
\left(\mathcal{H}\right)\right)\right)\subseteq
\iota_{Y}\left(\mathcal{F}\right)$ and $\iota_{Y}\left(
i^{-1}\left(\iota_{X}^{-1}\left(\mathcal{H}\right)\right)\right)\subseteq
\iota_{Y}\left(\mathcal{G}\right)$ so that
$\iota_{Y}\left(\mathcal{F}\right)$ and
$\iota_{Y}\left(\mathcal{G}\right)$ must converge to $y^{\sharp}$
as well.  Since $Y^{\sharp}$ is Hausdorff it follows by
(\ref{ConAss}) that $y^{\sharp}_{0}=y^{\sharp}_{1}=y^{\sharp}$.
Therefore $i^{\sharp}$ is injective.\\
Clearly $i^{\sharp}\left(Y^{\sharp}\right) \subseteq
a_{X^{\sharp}}\left(\iota_{X}\left(Y\right)\right)$.  To verify
the reverse inclusion, consider any $x^{\sharp} \in a_{X^{\sharp}}
\left( \iota_{X}\left(Y\right)\right)$.  Then
\begin{eqnarray}
\begin{array}{ll}
\exists & \mathcal{F}\mbox{ a filter on $\iota_{X}\left(Y\right)$ :} \\
& [\mathcal{F}]_{X^{\sharp}}\mbox{ converges to $x^{\sharp}$ in $X^{\sharp}$} \\
\end{array}.
\end{eqnarray}
Then there is a Cauchy filter $\mathcal{G}$ on $X$ so that
\begin{eqnarray}
\iota_{X}\left(\mathcal{G}\right)\cap[x^{\sharp}]\subseteq
[\mathcal{F}]_{X^{\sharp}}
\end{eqnarray}
This implies that the Cauchy filter $\mathcal{G}$ has a trace
$\mathcal{H}=\mathcal{G}_{|Y}$ on $Y$, which is a Cauchy filter on
$Y$.  The result now follows by the commutative diagram
(\ref{IncDiagram}).\\
The last statement follows immediately from Proposition
\ref{WylerMinimal}.
\end{proof}\\ \\
The following is an immediate consequence of Proposition
\ref{SubspUCSComp}.
\begin{corollary}\label{EmbExt}
Let $X$ and $Y$ be uniform convergence spaces, and
$\varphi:X\rightarrow Y$ a uniformly continuous embedding.  Then
there exists an injective uniformly continuous mapping
$\varphi^{\sharp}:X^{\sharp}\rightarrow Y^{\sharp}$, where
$X^{\sharp}$ and $Y^{\sharp}$ are the completions of $X$ and $Y$
respectively, which extends $\varphi$.
\end{corollary}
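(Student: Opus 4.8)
The plan is to factor the embedding $\varphi$ through its image and then apply Proposition \ref{SubspUCSComp}. Since $\varphi:X\rightarrow Y$ is a uniformly continuous embedding, its corestriction to the image $Z=\varphi(X)$, equipped with the subspace uniform convergence structure induced from $Y$, is by definition an isomorphism $\widetilde{\varphi}:X\rightarrow Z$ of uniform convergence spaces. First I would verify that this isomorphism lifts to an isomorphism $\widetilde{\varphi}^{\sharp}:X^{\sharp}\rightarrow Z^{\sharp}$ of the completions. Indeed, both $\widetilde{\varphi}$ and its inverse are uniformly continuous, so by the universal property (\ref{Univeral}) each extends uniquely to a uniformly continuous mapping between the completions; the uniqueness clause then forces the two extensions to be mutually inverse, whence $\widetilde{\varphi}^{\sharp}$ is a bijective, hence injective, isomorphism.

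Next I would invoke Proposition \ref{SubspUCSComp} for the subspace $Z\subseteq Y$. This yields an injective, uniformly continuous mapping $i^{\sharp}:Z^{\sharp}\rightarrow Y^{\sharp}$ extending the inclusion $i:Z\rightarrow Y$. The desired extension is then the composite
\begin{eqnarray}
\varphi^{\sharp}=i^{\sharp}\circ\widetilde{\varphi}^{\sharp}:X^{\sharp}\rightarrow Y^{\sharp}.\nonumber
\end{eqnarray}
As a composition of injective, uniformly continuous mappings, $\varphi^{\sharp}$ is again injective and uniformly continuous. To see that it extends $\varphi$, I would trace the canonical embeddings through the relevant commuting diagrams: on $\iota_{X}(X)$ the map $\widetilde{\varphi}^{\sharp}$ restricts to $\widetilde{\varphi}$ and $i^{\sharp}$ restricts to $i$, so that $\varphi^{\sharp}\circ\iota_{X}=\iota_{Y}\circ i\circ\widetilde{\varphi}=\iota_{Y}\circ\varphi$, as required.

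The only genuinely non-routine point is the first step, namely that the completion carries the isomorphism $\widetilde{\varphi}$ to an isomorphism of completions; everything thereafter is a formal composition of the maps supplied by Proposition \ref{SubspUCSComp} and the universal property. I expect this functoriality argument, resting on the uniqueness of uniformly continuous extensions, to be the main obstacle, though it is itself a direct consequence of the universal property (\ref{Univeral}).
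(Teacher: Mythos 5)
Your proof is correct and takes essentially the same route the paper intends: the paper offers no explicit proof, stating the corollary as an immediate consequence of Proposition \ref{SubspUCSComp}, and your factorization of $\varphi$ through its image $Z=\varphi(X)$, with the isomorphism $\widetilde{\varphi}:X\rightarrow Z$ lifted to an isomorphism of completions via the uniqueness clause of the universal property (\ref{Univeral}) and then composed with the injection $i^{\sharp}:Z^{\sharp}\rightarrow Y^{\sharp}$ from Proposition \ref{SubspUCSComp}, is precisely the argument being left implicit. The only tacit assumption worth flagging is that $X$ and $Y$ must be Hausdorff for the Wyler completions and the universal property to apply, a standing hypothesis the paper uses throughout without restating it in the corollary.
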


It should be noted that there are many different `completions'
which one may associate with a given Hausdorff uniform convergence
space, each designed so as to preserve a specific property, or
properties, of a uniform convergence space.  E. Reed \cite{Reed}
made a definitive study of several such completions. Furthermore,
the completion of a convergence vector space \cite{GGK}, the
completion of a convergence group \cite{Fric and Kent}, and the
Wyler completion \cite{Wyler} of a uniform convergence space are
in general all different.  Indeed, the Wyler completion is
typically not compatible with the algebraic structure of a
convergence group or convergence vector space \cite{Beattie},
while the convergence group completion of a convergence vector
space does in general not induce a vector space convergence
structure \cite{Beattie}.  Among all possible completions, the
Wyler completion is the only one that satisfies the universal
property (\ref{Univeral}).  None of the mentioned completions,
however, will, in general, preserve subspaces.

\section{Products of Uniform Convergence Spaces}

Besides subspaces, the product uniform convergence structure on
the Cartesian product of a family of uniform convergence spaces is
the simplest example of an initial uniform convergence structure,
and in this section we consider the completions of these initial
uniform convergence spaces.  As is the case for subspaces of
Hausdorff uniform convergence spaces, the Wyler completion of the
product of a family of Hausdorff uniform convergence spaces is in
general different from the product of the completions of the
components.  Indeed, Kent and Ruiz de Eguino \cite{Kent} gave the
following example.
\begin{example}
Consider on $\mathbb{Q}$ the uniform convergence structure induced
by the usual metric, and let $\mathbb{Q}\times \mathbb{Q}$ carry
the product uniform convergence structure.  The completion
$\mathbb{Q}^{\sharp}$ of $\mathbb{Q}$ is $\mathbb{R}$, equipped
with a suitable uniform convergence structure.  In particular, a
filter $\mathcal{F}$ converges to $x$ in $\mathbb{Q}^{\sharp}$ if
and only if
\begin{eqnarray}
[\{\{(x-\epsilon,x+\epsilon)\cup \mathbb{Q}\}\cap \{x\}\mbox{ :
}\epsilon>0\}]\subseteq \mathcal{F}.\nonumber
\end{eqnarray}
On the other hand, the completion $\left(\mathbb{Q}\times
\mathbb{Q}\right)^{\sharp}$ of $\mathbb{Q}\times \mathbb{Q}$ is
$\mathbb{R}\times \mathbb{R}$, equipped with the appropriate
uniform convergence structure.  A filter $\mathcal{G}$ converges
to $\left(x,y\right)$ in $\left(\mathbb{Q}\times
\mathbb{Q}\right)^{\sharp}$ if and only if
\begin{eqnarray}
[\{\{(x-\epsilon,x+\epsilon)\times (y-\epsilon,y+\epsilon)\cap
\mathbb{Q}\times \mathbb{Q}\}\cup \{\left(x,y\right)\}\mbox{ :
}\epsilon>0\}]\subseteq \mathcal{G}.\nonumber
\end{eqnarray}
Consider now the filter
\begin{eqnarray}
\mathcal{H}=[\{\{\left(\frac{1}{n},\pi\right)\mbox{ : }n\geq
k\}\mbox{ : }k\in\mathbb{N}\}].\nonumber
\end{eqnarray}
Clearly the filter $\mathcal{H}$ converges to $\left(0,\pi\right)$
in $\mathbb{Q}^{\sharp}\times \mathbb{Q}^{\sharp}$.  On the other
hand, $\mathcal{H}$ cannot converge to $\left(0,\pi\right)$ in
$\left(\mathbb{Q}\times \mathbb{Q}\right)^{\sharp}$.
\end{example}

The above example shows that the completion of the product of a
family of Hausdorff uniform convergence spaces may differ from the
product of the completions of the components.  However, applying
the results from Section 2 we obtain he following result
concerning the structure of the completion of the product of a
family of uniform convergence spaces.
\begin{theorem}\label{ProdCom}
Let $\left(X_{i}\right)_{i\in I}$ be a family of Hausdorff uniform
convergence spaces, and let $X$ denote their Cartesian product,
equipped with the product uniform convergence structure.  Then
there exists an bijective uniformly continuous mapping
\begin{eqnarray}
\iota_{X}^{\sharp}:X^{\sharp} \rightarrow \prod_{i\in I}X_{i},
\nonumber
\end{eqnarray}
where $X^{\sharp}$ and $X^{\sharp}_{i}$ denote the completions of
$X$ and the $X_{i}$, respectively.
\end{theorem}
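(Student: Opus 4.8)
The plan is to recognize $\prod_{i\in I} X_i^\sharp$ as a complete, Hausdorff uniform convergence space containing a copy of $X=\prod_{i\in I}X_i$ as a dense subspace, and then to invoke Proposition \ref{WylerMinimal} directly. (Here the target of $\iota_X^\sharp$ is to be read as the product of the completions $\prod_{i\in I}X_i^\sharp$.) Everything rests on the fact that the product uniform convergence structure is the initial structure with respect to the projections $p_i:\prod_{j}X_j^\sharp \to X_i^\sharp$, so that Cauchyness and convergence of a filter on the product are both characterized coordinatewise.

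First I would check that $\prod_i X_i^\sharp$ is complete and Hausdorff. Because the structure is initial, a filter $\mathcal{F}$ is Cauchy on the product exactly when each projection $p_i(\mathcal{F})$ is Cauchy in $X_i^\sharp$, and $\mathcal{F}$ converges to a point exactly when each projection converges to the corresponding coordinate. Completeness of each factor $X_i^\sharp$ then forces every projection of a Cauchy filter to converge, and coordinatewise convergence yields convergence of $\mathcal{F}$ itself; Hausdorffness is inherited from the factors since distinct points are separated by some projection.

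Next I would exhibit $X$ as a dense subspace of $\prod_i X_i^\sharp$ via the product $\iota=\prod_i \iota_{X_i}$ of the canonical embeddings. Since each $\iota_{X_i}:X_i\to X_i^\sharp$ is a uniformly continuous embedding and the product structure is initial, transitivity of initial structures shows that $\iota$ is again an embedding, so the product structure on $X$ coincides with the subspace structure induced from $\prod_i X_i^\sharp$. For density, given any $\left(x_i^\sharp\right)\in\prod_i X_i^\sharp$, I would select for each $i$ a filter $\mathcal{F}_i$ on $\iota_{X_i}(X_i)$ converging to $x_i^\sharp$ and form the product filter $\mathcal{F}=\prod_i\mathcal{F}_i$, whose basic sets $\prod_i A_i$, with $A_i=\iota_{X_i}(X_i)$ for all but finitely many $i$, are contained in $\iota(X)$. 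Its projection onto each factor is $\mathcal{F}_i$, so $\mathcal{F}$ converges to $\left(x_i^\sharp\right)$, which shows that $\iota(X)$ is dense. Proposition \ref{WylerMinimal}, applied with $X_0^\sharp=\prod_i X_i^\sharp$, then delivers the bijective, uniformly continuous map $\iota_X^\sharp:X^\sharp\to\prod_i X_i^\sharp$ extending $\iota$.

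The step I expect to be the main obstacle is the density argument when $I$ is infinite. One must take care that the product filter is genuinely supported on $\iota(X)$ rather than on the ambient product, which is why the basic sets must equal the full image $\iota_{X_i}(X_i)$ in all but finitely many coordinates, and one must ensure that coordinatewise convergence really does give convergence in the product \emph{uniform convergence structure} and not merely in the product topology. The completeness of the product, while conceptually routine, likewise depends essentially on the coordinatewise characterization of Cauchy filters in an initial uniform convergence structure, which I would state explicitly before using it.
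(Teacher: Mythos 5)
Your proof is correct, and it differs from the paper's in the key lemma it invokes, even though the two arguments share their computational core. Both rest on the same embedding $\iota=\prod_{i\in I}\iota_{X_{i}}:X\rightarrow\prod_{i\in I}X_{i}^{\sharp}$ and the same product-filter construction $\prod_{i\in I}\mathcal{F}_{i}$. The paper verifies the embedding property by explicit filter computations, cites Wyler for completeness of $\prod_{i\in I}X_{i}^{\sharp}$, and then obtains bijectivity in two separate steps: injectivity of the extension $\iota_{X}^{\sharp}$ comes from Corollary \ref{EmbExt} (hence ultimately from the subspace result, Proposition \ref{SubspUCSComp}), while surjectivity is proved directly, the product Cauchy filter being used to show that each point $\left(x_{i}^{\sharp}\right)_{i\in I}$ is attained as the image of the limit of that filter in $X^{\sharp}$. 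You instead use the product filter to prove that $\iota\left(X\right)$ is \emph{dense} in $\prod_{i\in I}X_{i}^{\sharp}$, verify completeness and Hausdorffness of the product coordinatewise, and then apply Proposition \ref{WylerMinimal} with $X_{0}^{\sharp}=\prod_{i\in I}X_{i}^{\sharp}$, which yields injectivity and surjectivity in a single stroke. Your route bypasses the subspace machinery of Section 2 entirely and replaces the paper's filter chase for the embedding with a cleaner appeal to transitivity of initial structures; the price is that you must state and check density, completeness and Hausdorffness explicitly, whereas the paper outsources completeness to Wyler and never needs to mention density (though its surjectivity computation is, in substance, exactly your density computation). The points you flag as delicate --- that the product filter genuinely has a trace on $\iota\left(X\right)$ because its basic sets equal $\iota_{X_{i}}\left(X_{i}\right)$ in all but finitely many coordinates, and that Cauchyness and convergence in an initial uniform convergence structure are characterized coordinatewise --- are precisely the right ones, and you handle them correctly.
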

\begin{proof}
First note that $\prod_{i\in I}X^{\sharp}_{i}$ is complete
\cite{Wyler}.  For every $i$, let $\iota_{X_{i}}:X_{i}\rightarrow
X^{\sharp}_{i}$ be the uniformly continuous embedding associated
with the completion $X^{\sharp}_{i}$ of $X_{i}$. Define the
mapping $\iota_{X}:X\rightarrow \prod X^{\sharp}_{i}$ through
\begin{eqnarray}\label{VarphiDef}
\iota_{X}:x=\left(x_{i}\right)_{i\in I}\mapsto
\left(\iota_{X_{i}}\left(x_{i}\right)\right)_{i\in I}\nonumber
\end{eqnarray}
For each $i$, let $\pi_{i}:X\rightarrow X_{i}$ be the projection.
Since each $\iota_{X_{i}}$ is injective, so is $\iota_{X}$.
Moreover, we have
\begin{eqnarray}
&\mathcal{U}\in\mathcal{J}_{X}&\Rightarrow\left(\pi_{i}\times
\pi_{i}\right)\left(\mathcal{U}\right)\in\mathcal{J}_{X_{i}}\nonumber\\
&& \Rightarrow\left(\iota_{X_{i}}\times\iota_{X_{i}}\right)
\left(\left(\pi_{i}\times\pi_{i}\right)\left(\mathcal{U}\right)\right)
\in\mathcal{J}^{\sharp}_{X_{i}}\nonumber\\
&&\Rightarrow\prod_{i\in
I}\left(\iota_{X_{i}}\times\iota_{X_{i}}\right)
\left(\left(\pi_{i}\times\pi_{i}\right)\left(\mathcal{U}\right)\right)
\in\mathcal{J}^{\sharp}_{\prod}\nonumber\\
&&\Rightarrow\left(\iota_{X}\times\iota_{X}\right)\left(\mathcal{U}\right)
\in\mathcal{J}^{\sharp}_{\prod}\nonumber
\end{eqnarray}
where $\mathcal{J}_{\prod}^{\sharp}$ denotes the product uniform
convergence structure on $\prod_{i\in I}X_{i}^{\sharp}$.  Hence
$\iota_{X}$ is uniformly continuous. Similarly, if the filter
$\mathcal{V}$ on $\iota_{X}\left(X\right)\times
\iota_{X}\left(X\right)$ belongs to the subspace uniform
convergence structure, then
\begin{eqnarray}
&\left(\pi_{i}\times\pi_{i}\right)\left(\mathcal{V}\right)\in
\mathcal{J}^{\sharp}_{X_{i}}&\Rightarrow\left(\iota_{X_{i}}^{-1}
\times\iota_{X_{i}}^{-1}\right)\left(\left(\pi_{i}\times\pi_{i}\right)
\left(\mathcal{V}\right)\right)\in\mathcal{J}_{X_{i}}\nonumber\\
&&\Rightarrow \prod_{i\in I}\left(\iota_{X_{i}}^{-1}
\times\iota_{X_{i}}^{-1}\right)\left(\left(\pi_{i}\times\pi_{i}\right)
\left(\mathcal{V}\right)\right)\in\mathcal{J}_{X}\nonumber\\
&&\Rightarrow\left(\iota_{X}^{-1}\times\iota_{X}^{-1}\right)
\left(\mathcal{V}\right)\in\mathcal{J}_{X}\nonumber
\end{eqnarray}
so that $\iota_{X}^{-1}$ is uniformly continuous.  Hence
$\iota_{X}$ is a uniformly continuous embedding.  It now follows
by Corollary \ref{EmbExt} that the mapping $\iota_{X}$ extends to
an injective uniformly continuous mapping
\begin{eqnarray}
\iota_{X}^{\sharp}:X^{\sharp} \rightarrow \prod_{i\in
I}X_{i}.\label{IX}
\end{eqnarray}
To see that the mapping (\ref{IX}) is surjective, we note that
$\iota_{X_{i}}\left(X_{i}\right)$ is dense in $X_{i}^{\sharp}$ for
each $i\in I$.  That is,
\begin{eqnarray}
\begin{array}{ll}
\forall & i\in I\mbox{ :} \\
\forall & x_{i}^{\sharp}\in X_{i}^{\sharp}\mbox{ :} \\
\exists & \mathcal{F}_{i}\mbox{ a Cauchy filter on }X_{i}\mbox{ :} \\
& \iota_{X_{i}}\left(\mathcal{F}_{i}\right)\mbox{ converges to }x_{i}^{\sharp}\mbox{ in }X_{i}^{\sharp} \\
\end{array}
\end{eqnarray}
The filter $\mathcal{F} = \prod_{i\in I}\mathcal{F}_{i}$ is a
Cauchy filter on $X$.  As such, there is some $x^{\sharp}\in
X^{\sharp}$ so that $\mathcal{F}$ converges to $x^{\sharp}$ in
$X^{\sharp}$.  Furthermore,
\begin{eqnarray}
\iota_{X}\left(\mathcal{F}\right) = \prod_{i\in
I}\iota_{i}\left(\mathcal{F}_{i}\right)
\end{eqnarray}
so that
$\iota_{X}^{\sharp}\left([\mathcal{F}]_{X^{\sharp}}\right)$
converges to $\left(x_{i}^{\sharp}\right)_{i\in I}$ in
$\prod_{i\in I}X_{i}^{\sharp}$.  As such, it follows by the
uniform continuity of $\iota_{X}^{\sharp}$ that
$\iota_{X}^{\sharp}\left(x^{\sharp}\right) =
\left(x_{i}^{\sharp}\right)_{i\in I}$.  This completes the proof.
\end{proof}

We note here that, similar to the case of the completion of
subspaces of Hausdorff uniform convergence spaces, the Wyler
completion $X^{\sharp}$ of $X= \prod_{i\in I}X_{i}$ is simply the
\textit{set} $\prod_{i\in I}X^{\sharp}_{i}$ equipped with the
finest complete, Hausdorff uniform convergence structure such that
$X$ is contained as a dense subspace.  In particular, we have the
following result, which follows by Theorem \ref{ProdCom} and
Proposition \ref{SubspUCSComp}.
\begin{proposition}\label{ProdProp}
The uniform convergence structure on $X^{\sharp}$ is the smallest
complete, Hausdorff uniform convergence structure on the set
$\prod_{i\in I}X_{i}^{\sharp}$ such that $\prod_{i\in I}X_{i}$ is
contained in $\prod_{i\in I}X_{i}^{\sharp}$ as a dense subspace.
\end{proposition}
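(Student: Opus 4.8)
The plan is to mirror, for products, the minimality argument behind the last assertion of Proposition \ref{SubspUCSComp}, using Theorem \ref{ProdCom} to realise the Wyler completion of the product directly on the set $\prod_{i\in I}X_i^{\sharp}$. Throughout write $\Pi=\prod_{i\in I}X_i$, equipped with the product uniform convergence structure, so that $X=\Pi$ and $X^{\sharp}$ is its Wyler completion, and write $\Pi^{\sharp}=\prod_{i\in I}X_i^{\sharp}$, which is complete and Hausdorff. Theorem \ref{ProdCom} supplies a bijective, uniformly continuous map $\iota_X^{\sharp}:X^{\sharp}\to\Pi^{\sharp}$ extending the dense embedding $\iota_X:\Pi\to\Pi^{\sharp}$, $x\mapsto(\iota_{X_i}(x_i))_{i\in I}$; at the level of underlying sets the latter is simply the natural inclusion $\Pi\subseteq\Pi^{\sharp}$.

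First I would use the bijection $\iota_X^{\sharp}$ to transport the uniform convergence structure of $X^{\sharp}$ onto the set $\Pi^{\sharp}$, calling the resulting structure $\mathcal{J}^{\sharp}$, so that $\iota_X^{\sharp}$ becomes a uniform convergence isomorphism of $X^{\sharp}$ onto $(\Pi^{\sharp},\mathcal{J}^{\sharp})$. Since a uniform convergence isomorphism preserves completeness, separation and dense embeddings, $(\Pi^{\sharp},\mathcal{J}^{\sharp})$ is a complete, Hausdorff uniform convergence space in which $\Pi$ is dense and carries exactly its product structure; in other words $(\Pi^{\sharp},\mathcal{J}^{\sharp})$ is a realisation of the Wyler completion of $\Pi$ on the underlying set $\Pi^{\sharp}$. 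Thus $\mathcal{J}^{\sharp}$ is \emph{one} structure of the type described in the statement, and it remains only to establish its minimality.

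For minimality, let $\mathcal{J}_0$ be any complete, Hausdorff uniform convergence structure on the set $\Pi^{\sharp}$ for which $\Pi$ is a dense subspace, the induced structure on $\Pi$ being the given product structure. Then $(\Pi^{\sharp},\mathcal{J}_0)$ is a complete, Hausdorff uniform convergence space containing $\Pi$ as a dense subspace, so Proposition \ref{WylerMinimal}, applied to $\Pi$ and the complete space $(\Pi^{\sharp},\mathcal{J}_0)$, yields a bijective, uniformly continuous map $h:(\Pi^{\sharp},\mathcal{J}^{\sharp})\to(\Pi^{\sharp},\mathcal{J}_0)$ which restricts to the identity on the dense subset $\Pi$. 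Once it is known that $h$ is the identity map of the set $\Pi^{\sharp}$, uniform continuity of $h$ reads precisely as $\mathcal{J}^{\sharp}\subseteq\mathcal{J}_0$, so that $\mathcal{J}^{\sharp}$ is the smallest such structure with respect to inclusion, as claimed.

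The step I expect to be the main obstacle is exactly this identification of $h$ with $\mathrm{id}_{\Pi^{\sharp}}$. Since $h$ fixes $\Pi$ pointwise, is uniformly continuous, and $\Pi$ is dense, the value $h(p)$ at a point $p\in\Pi^{\sharp}$ is forced by the limits of images of Cauchy filters from $\Pi$: choosing a Cauchy filter $\mathcal{F}$ on $\Pi$ whose image $\iota_X(\mathcal{F})$ converges to $p$ in $(\Pi^{\sharp},\mathcal{J}^{\sharp})$, the uniform continuity of $h$ together with $h|_{\Pi}=\mathrm{id}$ gives that $\iota_X(\mathcal{F})$ also converges to $h(p)$ in $(\Pi^{\sharp},\mathcal{J}_0)$. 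To conclude $h(p)=p$ I would compare these two limits coordinatewise: in $(\Pi^{\sharp},\mathcal{J}^{\sharp})$ the coordinate projections are uniformly continuous (this structure is finer than the product structure on $\Pi^{\sharp}$), so $\pi_i\mathcal{F}$ converges to $p_i$ in $X_i^{\sharp}$, and the delicate point is to see that the $i$-th coordinate of $h(p)$ is likewise this limit, after which the Hausdorff separation of each $X_i^{\sharp}$ forces the coordinates of $h(p)$ to equal those of $p$ for every $i$. This coordinatewise matching of limit points, underpinned by the uniqueness of uniformly continuous extensions guaranteed by the universal property (\ref{Univeral}), is the only genuinely technical ingredient; the remainder of the argument is formal and follows the pattern already used for subspaces in Proposition \ref{SubspUCSComp}.
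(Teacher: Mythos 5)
Your overall route is the same as the paper's: the paper obtains this proposition in one line from Theorem \ref{ProdCom} together with the minimality statement of Proposition \ref{SubspUCSComp}, i.e.\ ultimately from Proposition \ref{WylerMinimal}, and your first two paragraphs carry that out correctly. Moreover, you have put your finger on exactly what that one-line derivation leaves implicit: Proposition \ref{WylerMinimal} only produces a bijective uniformly continuous map $h:(\Pi^{\sharp},\mathcal{J}^{\sharp})\rightarrow(\Pi^{\sharp},\mathcal{J}_{0})$ extending the identity on $\Pi$, whereas the assertion $\mathcal{J}^{\sharp}\subseteq\mathcal{J}_{0}$ says that the identity map itself is uniformly continuous, so one must show $h=\mathrm{id}_{\Pi^{\sharp}}$.

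That step is where your argument has a genuine gap, and it cannot be repaired under the stated hypotheses. Your coordinatewise comparison needs the projections $\pi_{i}:(\Pi^{\sharp},\mathcal{J}_{0})\rightarrow X_{i}^{\sharp}$ to be continuous, but nothing assumed about $\mathcal{J}_{0}$ gives this: completeness, Hausdorffness, density of $\Pi$, and the requirement that $\Pi$ carry its product structure are all conditions on filters having a trace on $\Pi$, and they place no constraint on how the points of $\Pi^{\sharp}\setminus\Pi$ are matched with the limits of such filters. Concretely, let $\sigma$ be a bijection of the set $\Pi^{\sharp}$ fixing $\Pi$ pointwise but moving at least one point (possible whenever $\Pi^{\sharp}\setminus\Pi$ has at least two points, e.g.\ already for $X_{1}=\mathbb{Q}$), and put $\mathcal{J}_{0}=(\sigma\times\sigma)(\mathcal{J}^{\sharp})$. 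Since every filter generated by subsets of $\Pi$ (or of $\Pi\times\Pi$) is fixed by $\sigma$, the space $(\Pi^{\sharp},\mathcal{J}_{0})$ is complete, Hausdorff, induces the product structure on $\Pi$, and contains $\Pi$ as a dense subspace; yet a Cauchy filter $\mathcal{F}$ on $\Pi$ which $\mathcal{J}^{\sharp}$-converges to $p$ has $\sigma(p)$ as its unique $\mathcal{J}_{0}$-limit, so for $p$ with $\sigma(p)\neq p$ the identity map is not even continuous, i.e.\ $\mathcal{J}^{\sharp}\not\subseteq\mathcal{J}_{0}$. Thus, read literally as inclusion-minimality over all structures satisfying the hypotheses of the proposition, the claim is not provable --- your ``delicate point'' is precisely the spot where it breaks --- and what actually survives (and is all that Proposition \ref{WylerMinimal} delivers, both here and in the last part of Proposition \ref{SubspUCSComp}) is minimality in the weaker sense of a bijective uniformly continuous comparison map extending the identity on $\Pi$. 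If you want genuine inclusion-minimality, you must add a hypothesis tying the points of $\Pi^{\sharp}\setminus\Pi$ to the limits of Cauchy filters from $\Pi$, for instance that each $\pi_{i}$ be continuous with respect to $\mathcal{J}_{0}$; under such a hypothesis your coordinatewise argument does close the gap.
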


\section{Completion of Initial Uniform Convergence Spaces}

In view of the fact that the Wyler completion of uniform
convergence spaces does not, in general, preserve subspaces or
Cartesian products, initial structures are not invariant under the
formation of completions. That is, if $X$ carries the initial
uniform convergence structure with respect to a family of mappings
\begin{eqnarray}
\left(\psi_{i}:X\rightarrow X_{i}\right)_{i\in I}\nonumber
\end{eqnarray}
into Hausdorff uniform convergence spaces $X_{i}$, then the
completion $X^{\sharp}$ of $X$ does not necessarily carry the
initial uniform convergence structure with respect to
\begin{eqnarray}
\left(\psi^{\sharp}_{i}:X^{\sharp}\rightarrow
X^{\sharp}_{i}\right)_{i\in I},\nonumber
\end{eqnarray}
where $\psi^{\sharp}_{i}$ denotes the uniformly continuous
extension of $\psi^{\sharp}_{i}$ to $X^{\sharp}$.  In this regard,
one can at best obtain a generalization of Proposition
\ref{SubspUCSComp} and Theorem \ref{ProdCom}.  The first, and in
fact quite obvious, result in this regard is the following.
\begin{proposition}\label{InitProp}
Suppose that $X$ is equipped with the initial uniform convergence
structure with respect to a family of mappings
\begin{eqnarray}
\left(\varphi_{i}:X\rightarrow X_{i}\right)_{i\in I},\label{Fam}
\end{eqnarray}
where each uniform convergence space $X_{i}$ is Hausdorff, and the
family of mappings (\ref{Fam}) separates the points on $X$.  Then
each mapping $\varphi_{i}$ extends uniquely to a uniformly
continuous mapping
\begin{eqnarray}
\varphi_{i}^{\sharp}:X^{\sharp}\rightarrow
X_{i}^{\sharp}\label{ExtFam}
\end{eqnarray}
and the uniform convergence structure on $X^{\sharp}$ is finer
than the initial uniform convergence structure with respect to the
mappings (\ref{ExtFam}).
\end{proposition}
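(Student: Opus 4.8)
The plan is to treat the two assertions in turn, obtaining the extensions $\varphi_i^{\sharp}$ directly from the universal property (\ref{Univeral}), and then deducing the comparison of uniform convergence structures as an immediate consequence of the extremal characterization of the initial structure.

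For the existence and uniqueness of the extensions, I would first note that, by definition of the initial uniform convergence structure, each $\varphi_i:X\rightarrow X_i$ is uniformly continuous. Composing with the canonical embedding $\iota_{X_i}:X_i\rightarrow X_i^{\sharp}$, which is itself uniformly continuous, produces a uniformly continuous mapping $\iota_{X_i}\circ\varphi_i:X\rightarrow X_i^{\sharp}$ into the complete, Hausdorff uniform convergence space $X_i^{\sharp}$. Since the family $\left(\varphi_i\right)_{i\in I}$ separates the points of $X$ and each $X_i$ is Hausdorff, the space $X$ is Hausdorff, so that its Wyler completion $X^{\sharp}$ exists and the universal property (\ref{Univeral}) applies. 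This yields a unique uniformly continuous mapping $\varphi_i^{\sharp}:X^{\sharp}\rightarrow X_i^{\sharp}$ satisfying $\varphi_i^{\sharp}\circ\iota_X=\iota_{X_i}\circ\varphi_i$; the uniqueness is guaranteed by the density of $\iota_X\left(X\right)$ in $X^{\sharp}$ together with the Hausdorff property of $X_i^{\sharp}$.

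For the comparison of structures, I would appeal to the characterization of the initial uniform convergence structure $\mathcal{J}_0$ on $X^{\sharp}$ with respect to the family $\left(\varphi_i^{\sharp}\right)_{i\in I}$ as the largest uniform convergence structure on $X^{\sharp}$ for which every $\varphi_i^{\sharp}$ is uniformly continuous; equivalently, a filter $\mathcal{W}$ on $X^{\sharp}\times X^{\sharp}$ belongs to $\mathcal{J}_0$ precisely when $\left(\varphi_i^{\sharp}\times\varphi_i^{\sharp}\right)\left(\mathcal{W}\right)$ belongs to the uniform convergence structure of $X_i^{\sharp}$ for every $i\in I$. Since each $\varphi_i^{\sharp}$ is uniformly continuous with respect to the Wyler structure $\mathcal{J}_{X^{\sharp}}$, as established in the first part, every filter in $\mathcal{J}_{X^{\sharp}}$ satisfies this condition, whence $\mathcal{J}_{X^{\sharp}}\subseteq\mathcal{J}_0$. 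This inclusion is exactly the statement that $\mathcal{J}_{X^{\sharp}}$ is finer than the initial structure.

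I expect no genuine obstacle here, consistent with the author's remark that the result is \textit{quite obvious}: both assertions reduce to the universal property of the completion and the defining extremal property of initial structures. The only point requiring a moment's care is the verification that $X$ is Hausdorff, so that $X^{\sharp}$ is defined at all, which follows from the separation hypothesis on $\left(\varphi_i\right)_{i\in I}$ together with the Hausdorffness of the $X_i$. The real substance of the section will instead lie in the sharper results identifying conditions under which the reverse inclusion, and hence equality of the two structures, can be recovered.
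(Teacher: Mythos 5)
Your proof is correct. The paper offers no proof of this proposition at all (it is introduced as ``quite obvious''), and your argument --- composing $\varphi_{i}$ with the canonical embedding $\iota_{X_{i}}$, invoking the universal property (\ref{Univeral}) of the Wyler completion to get the unique uniformly continuous $\varphi_{i}^{\sharp}$ with $\varphi_{i}^{\sharp}\circ\iota_{X}=\iota_{X_{i}}\circ\varphi_{i}$, and then reading off $\mathcal{J}_{X^{\sharp}}\subseteq\mathcal{J}_{0}$ from the filter-wise characterization of the initial structure --- is exactly the standard reasoning the author intends, including the preliminary check that the point-separating hypothesis makes $X$ Hausdorff so that $X^{\sharp}$ exists.
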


Concerning the uniform convergence structure on $X^{\sharp}$,
Proposition \ref{InitProp} is, in the general case, the sharpest
result.  However, this result does not give any information
concerning the structure of the \textit{set} $X^{\sharp}$ and its
elements, which is the main interest of this paper. In this
regard, we have the following interesting results.
\begin{lemma}\label{Initlemma}
For each $i\in I$, let $X_{i}$ be a Hausdorff uniform convergence
space, with uniform convergence structure $\mathcal{J}_{X_{i}}$.
Let the uniform convergence space $X$ carry the initial uniform
convergence structure $\mathcal{J}_{X}$ with respect to the family
of mappings
\begin{eqnarray}
\left(\psi_{i}:X\mapsto X_{i}\right)_{i\in I}\nonumber
\end{eqnarray}
Assume that $\left(\psi_{i}\right)_{i\in I}$ separates the points
of $X$.  Then the mapping
\begin{eqnarray}
\Psi:X\ni x\mapsto \left(\psi_{i}\left(x\right)\right)_{i\in
I}\in\prod_{i\in I} X_{i}\label{PsiDef}
\end{eqnarray}
is a uniformly continuous embedding.  In particular, the diagram
\begin{eqnarray}
\begin{array}{c}
\setlength{\unitlength}{1cm} \thicklines
%\begin{pspicture}(13,6)
\begin{picture}(13,6)

\put(1.9,4.9){$X$} \put(2.5,5.0){\vector(1,0){7.4}}
\put(10.1,4.9){$X_{i}$} \put(6.3,5.2){$\psi_{i}$}
\put(2.2,4.6){\vector(1,-1){3.5}} \put(6.5,1.1){\vector(1,1){3.5}}
\put(5.7,0.6){$\prod X_{i}$} \put(3.6,2.5){$\Psi$}
\put(8.5,2.5){$\pi_{i}$}

\end{picture}
%\end{pspicture}
\end{array}\label{PSiDiagram}
\end{eqnarray}
commutes for every $i\in I$.
\end{lemma}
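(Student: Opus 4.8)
The plan is to establish that $\Psi$ is a uniformly continuous embedding by verifying, in turn, its three defining features: injectivity, uniform continuity of $\Psi$ itself, and uniform continuity of the inverse $\Psi^{-1}$ on the image $\Psi(X)$ carrying the subspace uniform convergence structure inherited from $\prod_{i\in I}X_i$. I would dispose of the easy parts first. Commutativity of the diagram (\ref{PSiDiagram}) is immediate, since $\pi_i(\Psi(x)) = \pi_i((\psi_j(x))_{j\in I}) = \psi_i(x)$ for every $x\in X$ and $i\in I$; I record this identity $\pi_i\circ\Psi = \psi_i$ at the outset, as it drives the whole argument. Injectivity follows at once from the separation hypothesis: if $x\neq y$ in $X$, some $\psi_i$ satisfies $\psi_i(x)\neq\psi_i(y)$, whence $\Psi(x)\neq\Psi(y)$.

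For uniform continuity of $\Psi$, I would use that the product uniform convergence structure $\mathcal{J}_{\prod}$ on $\prod_{i\in I}X_i$ is the initial structure with respect to the projections, so that a filter $\mathcal{V}$ on $\prod_{i\in I}X_i\times\prod_{i\in I}X_i$ lies in $\mathcal{J}_{\prod}$ exactly when $(\pi_i\times\pi_i)(\mathcal{V})\in\mathcal{J}_{X_i}$ for every $i$. Given $\mathcal{U}\in\mathcal{J}_X$, the identity $\pi_i\circ\Psi=\psi_i$ yields $(\pi_i\times\pi_i)((\Psi\times\Psi)(\mathcal{U})) = (\psi_i\times\psi_i)(\mathcal{U})$, and the right-hand side belongs to $\mathcal{J}_{X_i}$ precisely because $\mathcal{J}_X$ is the \emph{initial} structure for the family $(\psi_i)_{i\in I}$. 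Hence $(\Psi\times\Psi)(\mathcal{U})\in\mathcal{J}_{\prod}$, and $\Psi$ is uniformly continuous.

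The substantive step is to show that $\Psi^{-1}:\Psi(X)\to X$ is uniformly continuous, which is exactly the assertion that the initial structure $\mathcal{J}_X$ is no finer than the structure transported back along $\Psi$ from the subspace $\Psi(X)$. I would unwind the subspace structure: a filter $\mathcal{W}$ on $\Psi(X)\times\Psi(X)$ is admissible iff the filter it generates on $\prod_{i\in I}X_i\times\prod_{i\in I}X_i$ lies in $\mathcal{J}_{\prod}$, i.e. iff $(\pi_i\times\pi_i)(\mathcal{W})\in\mathcal{J}_{X_i}$ for all $i$, where here $\pi_i$ is understood restricted to $\Psi(X)$. On $\Psi(X)$ one has $\psi_i\circ\Psi^{-1}=\pi_i$, so that $(\psi_i\times\psi_i)((\Psi^{-1}\times\Psi^{-1})(\mathcal{W})) = (\pi_i\times\pi_i)(\mathcal{W})\in\mathcal{J}_{X_i}$ for every $i$. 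By the defining description of $\mathcal{J}_X$, this is precisely the condition for $(\Psi^{-1}\times\Psi^{-1})(\mathcal{W})$ to belong to $\mathcal{J}_X$. Thus $\Psi^{-1}$ is uniformly continuous, and $\Psi$ is an embedding.

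The only delicate point, and the step I would treat most carefully, is the bookkeeping with image filters: one must confirm that restricting a filter to $\Psi(X)$ and then pushing forward by $\pi_i$ produces the same filter on $X_i$ as projecting the ambient filter, so that the equality $(\psi_i\times\psi_i)\circ(\Psi^{-1}\times\Psi^{-1}) = (\pi_i\times\pi_i)$ genuinely holds at the level of pushforwards along the commuting maps $\pi_i|_{\Psi(X)}=\psi_i\circ\Psi^{-1}$. This is routine but necessary, and once it is in place the entire lemma reduces to juxtaposing the two parallel descriptions of the initial and product structures in terms of the filters $(\psi_i\times\psi_i)(\cdot)$ and $(\pi_i\times\pi_i)(\cdot)$.
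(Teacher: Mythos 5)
Your proposal is correct and takes essentially the same route as the paper's own proof: injectivity from the separation hypothesis, uniform continuity of $\Psi$ by combining $\pi_{i}\circ\Psi=\psi_{i}$ with the initiality of $\mathcal{J}_{X}$, and uniform continuity of $\Psi^{-1}$ by unwinding the product and subspace structures back through the defining property of $\mathcal{J}_{X}$. The only cosmetic difference is that you work directly with a filter on $\Psi\left(X\right)\times\Psi\left(X\right)$ and obtain an equality of pushforward filters, whereas the paper works with an ambient filter $\mathcal{V}\in\mathcal{J}_{\prod}$ having a trace on $\Psi\left(X\right)\times\Psi\left(X\right)$ and obtains the containment $\left(\psi_{i}\times\psi_{i}\right)\left(\left(\Psi^{-1}\times\Psi^{-1}\right)\left(\mathcal{V}\right)\right)\supseteq\left(\pi_{i}\times\pi_{i}\right)\left(\mathcal{V}\right)$; the substance is identical.
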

\begin{proof}
Since the family $\left(\varphi_{i}\right)_{i\in I}$ separates the
points of $X$, the mapping (\ref{PsiDef}) is injective.
Furthermore, the diagram (\ref{PSiDiagram}) clearly commutes. Note
that
\begin{eqnarray}
\begin{array}{ll}
\forall & i\in I\mbox{ :} \\
& \left(\psi_{i}\times \psi_{i}\right)\left(\mathcal{U}\right)\in\mathcal{J}_{X_{i}}\mbox{ :} \\
\end{array}.\nonumber
\end{eqnarray}
Hence we have
\begin{eqnarray}
\begin{array}{ll}
\forall & i\in I\mbox{ :} \\
& \left(\pi_{i}\times \pi_{i}\right)\left(\Psi\times \Psi\right)\left(\mathcal{U}\right)\in\mathcal{J}_{X_{i}}\mbox{ :} \\
\end{array}.\nonumber
\end{eqnarray}
Therefore $\left(\Psi\times\Psi\right)\left(\mathcal{U}\right)\in
\mathcal{J}_{\prod}$, which is the product uniform convergence
structure, so that $\Psi$ is uniformly continuous.\\
Let $\mathcal{V}\in\mathcal{J}_{\prod}$ be a filter on
$\prod_{i\in I}X_{i}\times \prod_{i\in I}X_{i}$ with a trace on
$\Psi\left(X\right)\times \Psi\left(X\right)$.  Then
\begin{eqnarray}
\begin{array}{ll}
\forall & i\in I\mbox{ :} \\
& \mbox{a) } \left(\pi_{i}\times\pi_{i}\right)\left(\mathcal{V}\right)\in\mathcal{J}_{X_{i}} \\
& \mbox{b) } W\in\left(\pi_{i}\times\pi_{i}\right)\left(\mathcal{V}\right) \Rightarrow W\cap\left(\psi_{i}\left(X\right)\times \psi_{i}\left(X\right)\right)\neq\emptyset \\
\end{array}
\end{eqnarray}
so that
\begin{eqnarray}
\begin{array}{ll}
\forall & i\in I\mbox{ :} \\
& \left(\psi_{i}\times\psi_{i}\right)\left(\left(\Psi^{-1}\times\Psi^{-1}\right)\left(\mathcal{V}\right)\right)\supseteq \left(\pi_{i}\times\pi_{i}\right)\left(\mathcal{V}\right) \\
\end{array}\nonumber
\end{eqnarray}
Form the definition of an initial uniform convergence structure,
and in particular the product uniform convergence structure, it
follows that $\left(\Psi^{-1}\times
\Psi^{-1}\right)\left(\mathcal{V}\right)\in\mathcal{J}_{X}$. Hence
$\Psi$ is a uniformly continuous embedding.
\end{proof}
The following is now a straight forward application of Lemma
\ref{Initlemma}, Theorem \ref{ProdCom} and Proposition
\ref{SubspUCSComp}.
\begin{theorem}\label{IUCSComp}
For each $i\in I$, let $X_{i}$ be a Hausdorff uniform convergence
space, with uniform convergence structure $\mathcal{J}_{X_{i}}$.
Let the uniform convergence space $X$ carry the initial uniform
convergence structure $\mathcal{J}_{X}$ with respect to the family
of mappings
\begin{eqnarray}
\left(\psi_{i}:X\mapsto X_{i}\right)_{i\in I}\nonumber
\end{eqnarray}
Assume that $\left(\psi_{i}\right)_{i\in I}$ separates the points
of $X$.  Then there exists a unique injective, uniformly
continuous mapping
\begin{eqnarray}
\Psi^{\sharp}:X^{\sharp}\rightarrow \prod_{i\in
I}X^{\sharp}_{i}\label{Embedding}
\end{eqnarray}
such that, for each $i\in I$, the diagram
\begin{eqnarray}
\begin{array}{c}
\setlength{\unitlength}{1cm} \thicklines
%\begin{pspicture}(13,6)
\begin{picture}(13,6)

\put(1.9,4.9){$X^{\sharp}$} \put(2.5,5.0){\vector(1,0){7.4}}
\put(10.1,4.9){$X^{\sharp}_{i}$}
\put(6.3,5.4){$\psi_{i}^{\sharp}$}
\put(2.2,4.6){\vector(1,-1){3.5}} \put(6.5,1.1){\vector(1,1){3.5}}
\put(5.7,0.6){$\prod X_{i}^{\sharp}$}
\put(3.4,2.5){$\Psi^{\sharp}$} \put(8.5,2.5){$\pi_{i}$}

\end{picture} \\
\end{array}
\end{eqnarray}
commutes, with $\pi_{i}$ the projection, and $\psi_{i}^{\sharp}$
the unique extension of $\psi_{i}$ to $X^{\sharp}$.
\end{theorem}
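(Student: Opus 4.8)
The plan is to realize $\Psi^{\sharp}$ as a composite of two maps supplied by the preceding results, so that no direct construction on $X^{\sharp}$ is needed. First I would invoke Lemma \ref{Initlemma}, which tells us that the mapping $\Psi:X\to\prod_{i\in I}X_{i}$, $x\mapsto\left(\psi_{i}\left(x\right)\right)_{i\in I}$, is a uniformly continuous embedding when $\prod_{i\in I}X_{i}$ carries the product uniform convergence structure. Since $\Psi$ is a uniformly continuous embedding, Corollary \ref{EmbExt} yields an injective, uniformly continuous extension $\Psi_{0}^{\sharp}:X^{\sharp}\to\left(\prod_{i\in I}X_{i}\right)^{\sharp}$ satisfying $\Psi_{0}^{\sharp}\circ\iota_{X}=\iota_{\prod}\circ\Psi$, where $\iota_{\prod}$ denotes the canonical embedding of $\prod_{i\in I}X_{i}$ into its completion.

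Next I would apply Theorem \ref{ProdCom} to obtain the bijective, uniformly continuous mapping $\iota^{\sharp}:\left(\prod_{i\in I}X_{i}\right)^{\sharp}\to\prod_{i\in I}X_{i}^{\sharp}$, which extends the canonical embedding $\left(x_{i}\right)_{i\in I}\mapsto\left(\iota_{X_{i}}\left(x_{i}\right)\right)_{i\in I}$. Setting $\Psi^{\sharp}=\iota^{\sharp}\circ\Psi_{0}^{\sharp}$ then gives a mapping $X^{\sharp}\to\prod_{i\in I}X_{i}^{\sharp}$ that is injective, being a composite of injective maps, and uniformly continuous, being a composite of uniformly continuous maps.

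To verify that the required diagrams commute, I would fix $i\in I$ and compare the two uniformly continuous maps $\pi_{i}\circ\Psi^{\sharp}$ and $\psi_{i}^{\sharp}$ from $X^{\sharp}$ into the Hausdorff space $X_{i}^{\sharp}$. Since $\iota_{X}\left(X\right)$ is dense in $X^{\sharp}$, it suffices to check that they agree on $\iota_{X}\left(X\right)$. Tracing a point $\iota_{X}\left(x\right)$ through the extension identities, namely $\Psi_{0}^{\sharp}\circ\iota_{X}=\iota_{\prod}\circ\Psi$, then the defining property of $\iota^{\sharp}$, then the definitions of $\Psi$ and $\pi_{i}$, gives $\pi_{i}\left(\Psi^{\sharp}\left(\iota_{X}\left(x\right)\right)\right)=\iota_{X_{i}}\left(\psi_{i}\left(x\right)\right)$, which is precisely $\psi_{i}^{\sharp}\left(\iota_{X}\left(x\right)\right)$ by the defining property of the extension $\psi_{i}^{\sharp}$. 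Two uniformly continuous maps into a Hausdorff uniform convergence space that agree on a dense subset coincide, so the diagram commutes.

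Finally, uniqueness is immediate from the universal property of the product: any mapping $\Phi:X^{\sharp}\to\prod_{i\in I}X_{i}^{\sharp}$ making all the diagrams commute must satisfy $\pi_{i}\circ\Phi=\psi_{i}^{\sharp}=\pi_{i}\circ\Psi^{\sharp}$ for every $i\in I$, and a map into a Cartesian product is determined by its components, forcing $\Phi=\Psi^{\sharp}$. I expect the only real hurdle to be the careful bookkeeping of the several canonical embeddings, $\iota_{X}$, $\iota_{\prod}$, $\iota_{X_{i}}$ and $\iota^{\sharp}$, when verifying commutativity on the dense set; conceptually the argument is just a concatenation of the subspace result (Corollary \ref{EmbExt}) with the product result (Theorem \ref{ProdCom}), with density and Hausdorffness handling the passage from the dense subset to all of $X^{\sharp}$.
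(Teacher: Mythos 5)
Your proposal is correct and follows essentially the same route as the paper, which states the theorem as ``a straightforward application of Lemma \ref{Initlemma}, Theorem \ref{ProdCom} and Proposition \ref{SubspUCSComp}'' without supplying details: you compose the extension of the embedding $\Psi$ (via Corollary \ref{EmbExt}, itself a consequence of Proposition \ref{SubspUCSComp}) with the bijection of Theorem \ref{ProdCom}, exactly as intended. Your verification of commutativity by density plus Hausdorffness, and of uniqueness via the components $\pi_{i}$, correctly fills in the bookkeeping the paper omits.
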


Within the context of nonlinear PDEs, as explained in Section 1,
Theorem \ref{IUCSComp} may be interpreted as a regularity result.
Indeed, consider some space $X\subseteq \mathcal{C}^{\infty}
\left(\Omega\right)$ of classical, smooth functions on an open,
nonempty subset $\Omega$ of $\mathbb{R}^{n}$.  Equip $X$ with the
initial uniform convergence structure $\mathcal{J}_{X}$ with
respect to the family of mappings
\begin{eqnarray}
D^{\alpha}:X\rightarrow Y\mbox{,
}\alpha\in\mathbb{N}^{n}\label{DerMaps}
\end{eqnarray}
where $Y$ is some space of functions on $\Omega$ that contains
$D^{\alpha}\left(X\right)$ for each $\alpha\in\mathbb{N}^{n}$.  In
view of Lemma \ref{Initlemma}, the mapping
\begin{eqnarray}
\textbf{D}:X\ni u\rightarrow \left(D^{\alpha}u\right) \in
Y^{\mathbb{N}},\label{D}
\end{eqnarray}
while Theorem \ref{IUCSComp} guarantees that the extension
\begin{eqnarray}
\textbf{D}^{\sharp}:X^{\sharp}\ni u\rightarrow
\left(D^{\alpha}u\right) \in Y^{\sharp\mathbb{N}}\label{ExtD}
\end{eqnarray}
 of (\ref{D}) is injective and that the diagram
\begin{eqnarray}
\begin{array}{c}
\setlength{\unitlength}{1cm} \thicklines
%\begin{pspicture}(13,6)
\begin{picture}(13,6)

\put(1.9,4.9){$X^{\sharp}$} \put(2.5,5.0){\vector(1,0){7.4}}
\put(10.1,4.9){$Y^{\sharp}$} \put(6.3,5.2){$D^{\alpha\sharp}$}
\put(2.2,4.6){\vector(1,-1){3.5}} \put(6.5,1.1){\vector(1,1){3.5}}
\put(5.7,0.6){$Y^{\sharp\mathbb{N}}$}
\put(3.2,2.5){$\textbf{D}^{\sharp}$} \put(8.5,2.5){$\pi_{\alpha}$}

\end{picture}
%\end{pspicture}
\end{array}\label{DDiagram}
\end{eqnarray}
commutes.  Here
\begin{eqnarray}
D^{\alpha\sharp}:X^{\sharp}\rightarrow Y^{\sharp}\mbox{, }\alpha
\in \mathbb{N}^{n}
\end{eqnarray}
are the uniformly continuous extension of the mappings
(\ref{DerMaps}).  As such, each \textit{generalized function}
$u^{\sharp}\in X^{\sharp}$ may be identified with
$\textbf{D}^{\sharp}u^{\sharp} \in Y^{\sharp\mathbb{N}}$.

\section{An Application to Nonlinear PDEs}

The Order Completion Method \cite{Oberguggenberger} for nonlinear
partial differential equations is a general and type independent
theory for the existence and regularity of generalized solutions
of nonlinear PDEs.  The generalized solutions obtained through
this method are constructed as elements of the Dedekind completion
of suitable spaces of piecewise smooth functions. Recently, see
\cite{vdWalt6} through \cite{vdWalt7}, this method was
significantly enriched by reformulating it in terms of suitable
uniform convergence structures, notably the uniform order
convergence structure \cite{vdWalt6}.

We now present, as an application of the results obtained in
Sections 2, 3 and 4, a further enrichment of the basic theory.  In
particular, we prove existence and basic regularity results for
generalized solutions of arbitrary $\mathcal{C}^{k}$-smooth
systems of nonlinear PDEs. In this regard, consider a system of
$K$ nonlinear PDEs, each of order at most $m$, of the form
\begin{eqnarray}
\textbf{T}\left(x,D\right)\textbf{u}\left(x\right)=\textbf{f}\left(x\right)\mbox{,
}x\in\Omega,\label{OCMNLPDE}
\end{eqnarray}
where $\Omega\subseteq\mathbb{R}^{n} $ is some nonempty open
subset of $\textbf{R}^{n}$.  The right hand term
$\textbf{f}:\Omega\rightarrow \mathbb{R}^{K}$ is assumed to be a
$\mathcal{C}^{k}$-smooth function on $\Omega$, with components
$f_{1},...,f_{K}$, while the partial differential operator
$\textbf{T}\left(x,D\right)$ is supposed to be defined through a
$\mathcal{C}^{k}$-smooth function
\begin{eqnarray}
\textbf{F}:\Omega\times\mathbb{R}^{M}\rightarrow
\mathbb{R}^{K}\label{PDEDefMap}
\end{eqnarray}
by
\begin{eqnarray}
\begin{array}{ll}
\forall & \textbf{u}\in\mathcal{C}^{k}\left(\Omega\right)^{K}\mbox{ :} \\
\forall & x\in\Omega\mbox{ :} \\
& \textbf{T}\left(x,D\right)u\left(x\right)= F\left(x,u\left(x\right),...,D^{\alpha}u\left(x\right),...\right),\mbox{, }|\alpha|\leq m \\
\end{array},\label{PDEOPDef}
\end{eqnarray}
with $k\in\mathbb{N}\cup\{\infty\}$.  We also make the following
technical assumption:
\begin{eqnarray}
\begin{array}{ll}
\forall & x\in\Omega\mbox{ :} \\
& \textbf{f}\left(x\right)\in\textrm{int}\{\textbf{F}\left(x,\xi\right)\mbox{ : }\xi\in\mathbb{R}^{M}\} \\
\end{array}\label{Ass}
\end{eqnarray}
Note that (\ref{Ass}) is merely a a necessary condition for the
existence of a classical solution to (\ref{OCMNLPDE}) on a
neighborhood of $x\in \Omega$.

We construct generalized solutions to (\ref{OCMNLPDE}) which may
be \textit{assimilated} with functions which are
$\mathcal{C}^{k}$-smooth everywhere on $\Omega$, except possible
on a closed nowhere dense set $\Gamma\subset\Omega$.  In this
regard, we consider the space
$\mathcal{NL}\left(\Omega\right)^{K}$ of nearly finite, normal
lower semi-continuous functions on $\Omega$.  Recall
\cite{Dilworth}, see also \cite{vdWalt6}, that an extended real
valued function
\begin{eqnarray}
u:\Omega\rightarrow \overline{\mathbb{R}} \nonumber
\end{eqnarray}
is normal lower semi-continuous at $x\in\Omega$ whenever
\begin{eqnarray}
\left(I\circ S\right)\left(u\right)\left(x\right) =
u\left(x\right),\label{NLSDef}
\end{eqnarray}
and $u$ is normal lower semi-continuous on $\Omega$ whenever it is
normal lower semi-continuous at every $x\in\Omega$.  Here $I$ and
$S$ are the Lower and Upper Baire Operators, respectively, defined
through
\begin{eqnarray}
I\left(u\right)\left(x\right) = \sup\{\inf\{u\left(y\right)\mbox{
: }\|x-y\|<\delta\}\mbox{ : }\delta>0\}\mbox{, }x\in
\Omega\label{IDef}
\end{eqnarray}
and
\begin{eqnarray}
S\left(u\right)\left(x\right) = \inf\{\sup\{u\left(y\right)\mbox{
: }\|x-y\|<\delta\}\mbox{ : }\delta>0\}\mbox{, }x\in
\Omega.\label{SDef}
\end{eqnarray}
It is clear that an extended real valued mapping on $\Omega$ is
normal lower semi-continuous at $x\in\Omega$ whenever $u$ is real
valued and continuous at $x$.  A normal lower semi-continuous
function is called nearly finite whenever the set
\begin{eqnarray}
\left\{x\in\Omega\mbox{ : }u\left(x\right)\in\mathbb{R} \right\}
\end{eqnarray}
is open and dense in $\Omega$.  The set $\mathcal{NL}\left(\Omega
\right)$ is a fully distributive and Dedekind complete lattice
with respect to the pointwise order
\begin{eqnarray}
u\leq v \Leftrightarrow \left(\begin{array}{ll}
\forall & x\in X\mbox{ :} \\
& u\left(x\right)\leq v\left(x\right) \\
\end{array}\right).\label{NLOrder}
\end{eqnarray}
We consider the following subspaces of
$\mathcal{NL}\left(\Omega\right)$.  Namely, for
$l\in\mathbb{N}\cup \{\infty\}$, we consider the set
\begin{eqnarray}
\mathcal{ML}^{l}\left(\Omega\right) = \left\{u\in\mathcal{NL}
\left(\Omega\right)\mbox{ }\begin{array}{|ll}
\exists & \Gamma\subset\Omega\mbox{ closed nowhere dense :} \\
& u\in\mathcal{C}^{l}\left(\Omega\setminus\Gamma\right) \\
\end{array}\right\}.\label{MLlDef}
\end{eqnarray}
\begin{theorem}\label{MlLattice}
For each $l\geq 0$, the space
$\mathcal{ML}^{l}\left(\Omega\right)$ is a fully distributive
lattice with respect to the pointwise order (\ref{NLOrder}).
\end{theorem}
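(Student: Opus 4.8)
The plan is to prove the statement in two stages: first to show that $\mathcal{ML}^{l}(\Omega)$ is a sublattice of $\mathcal{NL}(\Omega)$, that is, that it is closed under the operations $\vee$ and $\wedge$ of $\mathcal{NL}(\Omega)$; and then to deduce full distributivity of $\mathcal{ML}^{l}(\Omega)$ from that of $\mathcal{NL}(\Omega)$. The key structural fact I would use is that the join $u\vee v$ and meet $u\wedge v$ in $\mathcal{NL}(\Omega)$ are the normal lower semi-continuous regularizations of the pointwise maximum and minimum, obtained by applying the Baire operators $I$ and $S$. Since these operators are defined purely in terms of arbitrarily small balls, they are \emph{local}, and they reduce to the identity at any point near which their argument is continuous.

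First I would fix $u,v\in\mathcal{ML}^{l}(\Omega)$ with associated closed nowhere dense sets $\Gamma_{u},\Gamma_{v}$, and put $\Gamma=\Gamma_{u}\cup\Gamma_{v}$, which is again closed and nowhere dense, so that $\Omega\setminus\Gamma$ is open and dense in $\Omega$. On $\Omega\setminus\Gamma$ both $u$ and $v$ are $\mathcal{C}^{l}$, hence continuous, so the pointwise functions $\max\{u,v\}$ and $\min\{u,v\}$ are continuous there. By the locality of $I$ and $S$ and the fact that they act as the identity on locally continuous functions, I would conclude that $u\vee v$ coincides with $\max\{u,v\}$ and $u\wedge v$ with $\min\{u,v\}$ at every point of $\Omega\setminus\Gamma$ (this identification, rather than a mere inequality, is the point needing care). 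Since $\mathcal{NL}(\Omega)$ is a lattice, $u\vee v$ and $u\wedge v$ already belong to $\mathcal{NL}(\Omega)$, so it only remains to locate a closed nowhere dense set off which they are $\mathcal{C}^{l}$.

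Next I would analyse the smoothness of $\max\{u,v\}$ on $\Omega\setminus\Gamma$. On the relatively open sets $\{u>v\}$, $\{v>u\}$ and $\mathrm{int}\{u=v\}$ the maximum equals one of the $\mathcal{C}^{l}$ functions $u$, $v$, and is therefore $\mathcal{C}^{l}$; the only points at which it may fail to be $\mathcal{C}^{l}$ lie on the common boundary $\Sigma=\partial\{u>v\}$, taken relative to $\Omega\setminus\Gamma$, and the same set controls $\min\{u,v\}$. The main technical observation is that the boundary of a (relatively) open set has empty interior, hence is nowhere dense, so $\Sigma$ is nowhere dense in $\Omega\setminus\Gamma$. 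Setting $\Gamma'=\Gamma\cup\overline{\Sigma}$, I would verify that $\Gamma'$ is closed and nowhere dense in $\Omega$: a set that is nowhere dense in the dense open subspace $\Omega\setminus\Gamma$ is nowhere dense in $\Omega$, and a finite union of closed nowhere dense sets is closed nowhere dense. Because $\Omega\setminus\Gamma'\subseteq\Omega\setminus\Gamma$, on $\Omega\setminus\Gamma'$ we have $u\vee v=\max\{u,v\}$ and $u\wedge v=\min\{u,v\}$, and both are $\mathcal{C}^{l}$ off $\Sigma$; hence $u\vee v,u\wedge v\in\mathcal{C}^{l}(\Omega\setminus\Gamma')$, placing them in $\mathcal{ML}^{l}(\Omega)$.

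Finally, having established closure under $\vee$ and $\wedge$, I would note that the two distributive identities are identities in the operations $\vee$ and $\wedge$, and therefore hold automatically in any sublattice in which these operations agree with those of the ambient lattice; full distributivity of $\mathcal{ML}^{l}(\Omega)$ thus follows at once from that of $\mathcal{NL}(\Omega)$. I expect the main obstacle to be the closure step, and within it the verification that the set on which $\max\{u,v\}$ or $\min\{u,v\}$ loses $\mathcal{C}^{l}$-smoothness is nowhere dense; this hinges entirely on the fact that the crossing set contributes only through the boundary of an open set, which has empty interior.
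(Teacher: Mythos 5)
Your proposal is correct and takes essentially the same route as the paper's proof: the paper likewise reduces everything to closure under binary $\sup$ and $\inf$ by decomposing $\Omega\setminus\Gamma$ into the open sets $\{u<v\}$, $\{v<u\}$ and $\mathrm{int}\{x : u(x)=v(x)\}$, observing that the supremum and infimum are $\mathcal{C}^{l}$-smooth on this open dense set (so that the exceptional set is closed nowhere dense), and then inheriting distributivity from $\mathcal{NL}\left(\Omega\right)$. The only difference is one of bookkeeping: where you justify the identification of $u\vee v$ with $\max\{u,v\}$ on the continuity region directly from the locality of the Baire operators $I$ and $S$, the paper delegates exactly this step to Theorem 1 of \cite{vdWalt6}, and your single exceptional set $\partial\{x : u(x)>v(x)\}$ is just a slightly sharper description of the complement of the paper's three-set union.
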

\begin{proof}
Consider any $u,v\in \mathcal{ML}^{l}\left(\Omega\right)$.  Then
there is a closed and nowhere dense subset $\Gamma$ of $\Omega$
such that $u,v \in \mathcal{C}^{l}\left(\Omega\setminus\Gamma
\right)$.  Define the open subsets $U$, $V$ and $W$ of $\Omega
\setminus\Gamma$ through
\begin{eqnarray}
U=\{x\in\Omega\setminus\Gamma\mbox{ : }
u\left(x\right)<v\left(x\right) \},\nonumber
\end{eqnarray}
\begin{eqnarray}
V=\{x\in\Omega\setminus\Gamma\mbox{ : }
v\left(x\right)<u\left(x\right) \}\nonumber
\end{eqnarray}
and
\begin{eqnarray}
W=\textrm{int}\{x\in\Omega\setminus\Gamma\mbox{ : }
u\left(x\right)=v\left(x\right) \},\nonumber
\end{eqnarray}
It is clear that the function
\begin{eqnarray}
\varphi:\Omega\ni x\mapsto
\sup\{u\left(x\right),v\left(x\right)\}\in\overline{\mathbb{R}}\nonumber
\end{eqnarray}
is $\mathcal{C}^{l}$-smooth on $U\cup V\cup W$.  Furthermore, the
set $U\cup V\cup W$ is dense in $\Omega\setminus\Gamma$.  As such,
it follows by \cite[Theorem 1]{vdWalt6} that $\sup\{u,v\}$ belongs
to
$\mathcal{ML}^{l}\left(\Omega\right)$.\\
The existence of the infimum of $u,v\in\mathcal{ML}^{l} \left(
\Omega\right)$ in $\mathcal{ML}^{l} \left( \Omega\right)$ follows
in the same way.  The distributivity of
$\mathcal{ML}^{l}\left(\Omega\right)$ now follows by the
corresponding property for $\mathcal{NL}\left(\Omega\right)$.
\end{proof}

With the nonlinear partial differential operator (\ref{PDEOPDef})
we may associate a mapping
\begin{eqnarray}
\textbf{T}:\mathcal{ML}^{m+k}\left(\Omega\right)^{K} \rightarrow
\mathcal{ML}^{k}\left(\Omega\right)^{K}.\label{ExtPDEMap}
\end{eqnarray}
In particular, the components of the mapping (\ref{ExtPDEMap}) may
be defined through
\begin{eqnarray}
T_{j}:\mathcal{ML}^{m+k}\left(\Omega\right)^{K} \ni \textbf{u}
\mapsto \left(I\circ
S\right)\left(F_{j}\left(\cdot,...,\mathcal{D}^{\alpha}u_{i},...\right)\right)
\in \mathcal{ML}^{k}\left(\Omega\right)
\end{eqnarray}
where, for $j=1,...,K$, the mappings $F_{j}:\Omega\times
\mathbb{R}^{M}\rightarrow \mathbb{R}$ are the components of
(\ref{PDEDefMap}).  On the space
$\mathcal{ML}^{m+k}\left(\Omega\right)^{K}$ we define the
equivalence relation
\begin{eqnarray}
\begin{array}{ll}
\forall & \textbf{u},\textbf{v}\in\mathcal{ML}^{m+k}\left(\Omega\right)^{K}\mbox{ :} \\
& \textbf{u}\sim_{\textbf{T}}\textbf{v}\Leftrightarrow \textbf{Tu}=\textbf{Tv} \\
\end{array},\label{TEq}
\end{eqnarray}
and we denote the quotient space
$\mathcal{ML}^{m+k}\left(\Omega\right)^{K}/\sim_{\textbf{T}}$ by
$\mathcal{ML}^{k}_{T}\left(\Omega\right)$.  We may associate with
the mapping $\textbf{T}$ an injective mapping
\begin{eqnarray}
\widehat{\textbf{T}}:\mathcal{ML}^{m+k}_{\textbf{T}}\left(\Omega\right)
\rightarrow \mathcal{ML}^{k}\left(\Omega\right)^{K}
\end{eqnarray}
so that the diagram
\begin{eqnarray}
\begin{array}{c}
\setlength{\unitlength}{1cm} \thicklines
%\begin{pspicture}(13,6)
\begin{picture}(13,6)

\put(1.9,5.2){$\mathcal{ML}^{m+k}\left(\Omega\right)^{K}$}
\put(4.3,5.3){\vector(1,0){4.6}}
\put(9.1,5.2){$\mathcal{ML}^{k}\left(\Omega\right)^{K}$}
\put(5.7,5.5){$\textbf{T}$} \put(2.2,5.0){\vector(0,-1){3.5}}
\put(3.7,1.0){\vector(1,0){5.2}}
\put(1.9,0.9){$\mathcal{ML}^{m+k}_{\textbf{T}}\left(\Omega\right)$}
\put(9.1,0.9){$\mathcal{ML}^{k}\left(\Omega\right)^{K}$}
\put(1.6,3.2){$q_{\textbf{T}}$} \put(10.4,3.2){$i$}
\put(10.2,5.0){\vector(0,-1){3.5}}
\put(5.7,1.2){$\widehat{\textbf{T}}$}

\end{picture}
%\end{pspicture}
\end{array}\label{TTHat}
\end{eqnarray}
commutes, with $q_{\textbf{T}}$ the canonical quotient mapping
associated with the equivalence relation (\ref{TEq}), and $i$ the
identity.  In view of the commutative diagram (\ref{TTHat}), the
equation
\begin{eqnarray}
\textbf{Tu}=\textbf{f},\label{TPDE}
\end{eqnarray}
which is an extension of the system of PDEs (\ref{OCMNLPDE}), is
equivalent to
\begin{eqnarray}
\widehat{\textbf{T}}\textbf{U}=\textbf{f}\label{THatPDE}
\end{eqnarray}
in the sense that
\begin{eqnarray}
\begin{array}{ll}
\forall & \textbf{u}\in \mathcal{ML}^{m+k}\left(\Omega\right)^{K}\mbox{ :} \\
& \textbf{Tu}=\textbf{f}\Leftrightarrow \widehat{\textbf{T}}\left(q_{\textbf{T}}\left(\textbf{u}\right)\right)=\textbf{f} \\
\end{array}
\end{eqnarray}
and
\begin{eqnarray}
\begin{array}{ll}
\forall & \textbf{U}\in \mathcal{ML}^{m+k}_{\textbf{T}}\left(\Omega\right)\mbox{ :} \\
& \widehat{\textbf{T}}\textbf{U}=\textbf{f}\Leftrightarrow \textbf{T}\left(q_{\textbf{T}}^{-1}\left(\textbf{U}\right)\right)=\{\textbf{f}\} \\
\end{array}.
\end{eqnarray}

Let us now introduce suitable uniform convergence structures on
$\mathcal{ML}^{m+k}\left(\Omega\right)^{K}$ and
$\mathcal{ML}^{m+k}_{\textbf{T}}\left(\Omega\right)$.  In this
regard, recall \cite{RieszI} that a sequence $\left(x_{n}\right)$
on a partially ordered set $L$ order converges to $x\in L$ if and
only if
\begin{eqnarray}
\begin{array}{ll}
\exists & \left(\lambda_{n}\right)\mbox{, }\left(\mu_{n}\right)\subset L\mbox{ :} \\
& \begin{array}{ll} 1) & n\in\mathbb{N}\Rightarrow
\left(\begin{array}{ll}
1.1) & \lambda_{n}\leq \lambda_{n+1}\leq \mu_{n+1}\leq \mu_{n} \\
1.2) & \lambda_{n}\leq u_{n}\leq \mu_{n} \\
\end{array}\right) \\
2) & \sup\{\lambda_{n}\mbox{ : }n\in\mathbb{N}\}=u=\inf\{\mu_{n}\mbox{ : }n\in\mathbb{N}\} \\
\end{array} \\
\end{array}.\label{OCDef}
\end{eqnarray}
In general, the order convergence of sequences cannot be induced
through a topology.  That is, for a partially ordered set $L$,
there is in general no topology $\tau$ on $L$ so that a sequence
$\left(x_{n}\right)$ on $L$ converges to $x\in L$ with respect to
$\tau$ if and only if $\left(x_{n}\right)$ order converges to $x$.

However, in view of Theorem \ref{MlLattice}, the order convergence
of sequences on $\mathcal{ML}^{k}\left(\Omega\right)$ may be
induced by a convergence structure \cite{Anguelov and van der
Walt}.  In particular, the order convergence structure
$\lambda_{o}$, which is defined through
\begin{eqnarray}
\mathcal{F}\in\lambda_{o}\left(u\right)\Leftrightarrow
\left(\begin{array}{ll}
\forall & n\in \mathbb{N}\mbox{ :} \\
\exists & [\lambda_{n},\mu_{n}]\subset \mathcal{ML}^{l}\left(\Omega\right)\mbox{ :} \\
& \begin{array}{ll}
1) & n\in\mathbb{N}\Rightarrow [\lambda_{n+1},\mu_{n+1}]\subseteq [\lambda_{n},\mu_{n}] \\
2) & \sup\{\lambda_{n}\mbox{ : }n\in\mathbb{N}\}=u=\inf\{\mu_{n}\mbox{ : }n\in\mathbb{N}\} \\
3) & [\{[\lambda_{n},\mu_{n}]\mbox{ : }n\in\mathbb{N}\}]\subseteq\mathcal{F} \\
\end{array} \\
\end{array}\right),\label{OCSDef}
\end{eqnarray}
induces the order convergence of sequences.  Furthermore,
$\lambda_{o}$ is Hausdorff and first countable, see \cite[Theorem
17]{Anguelov and van der Walt}.  The Cartesian product
$\mathcal{ML}^{k}\left(\Omega\right)^{K}$ is equipped with the
product convergence structure $\lambda_{o}^{K}$, see
\cite{Beattie}, which is defined through
\begin{eqnarray}
\mathcal{F}\in\lambda_{o}^{K}\left(\textbf{u}\right)
\Leftrightarrow \left(\begin{array}{ll}
\forall & i=1,...,K\mbox{ :} \\
& \pi_{i}\left(\mathcal{F}\right)\in\lambda_{o}\left(u_{i}\right) \\
\end{array}\right),\label{ProdCStr}
\end{eqnarray}
where $\pi_{i}$ denotes the projection.  Since $\lambda_{o}$ is
Hausdorff and first countable, so is $\lambda_{o}^{K}$.
Furthermore, a sequence $\left(\textbf{u}_{n}\right)$ in
$\mathcal{ML}^{k}\left(\Omega\right)^{K}$ converges to $\textbf{u}
\in \mathcal{ML}^{k}\left(\Omega\right)^{K}$ if and only if
$\left(u_{n,i}\right)$ order converges to $u_{i}$ for each
$i=1,...,K$.

In view of the fact that $\lambda_{o}^{K}$ is Hausdorff, it
follows that the associated uniform convergence structure
$\mathcal{J}_{\lambda_{o}^{K}}$, \cite[Proposition
2.1.7]{Beattie}, namely
\begin{eqnarray}
\begin{array}{ll}
\forall & \mathcal{U}\mbox{ a filter on $\mathcal{ML}^{k}\left(\Omega\right)^{K}\times \mathcal{ML}^{k}\left(\Omega\right)^{K}$ :} \\
& \mathcal{U}\in\mathcal{J}_{\lambda_{o}^{K}}\Leftrightarrow
\left(\begin{array}{ll}
\exists & \mathcal{F}_{1},...,\mathcal{F}_{k}\mbox{ filters on $\mathcal{ML}^{k}\left(\Omega\right)^{K}$ :} \\
\exists & \textbf{u}_{1},...,\textbf{u}_{k}\in\mathcal{ML}^{k}\left(\Omega\right)^{K}\mbox{ :} \\
& \begin{array}{ll}
1) & \mathcal{F}_{i}\in\lambda_{o}\left(u_{i}\right)\mbox{, }i=1,...,K \\
2) & \mathcal{U}\supseteq \left(\mathcal{F}_{1}\times\mathcal{F}_{1}\right)\cap...\cap\left(\mathcal{F}_{k}\times\mathcal{F}_{k}\right) \\
\end{array} \\
\end{array}\right) \\
\end{array}\label{AssUCS}
\end{eqnarray}
is uniformly Hausdorff and complete.  Furthermore, the uniform
convergence structure (\ref{AssUCS}) induces the convergence
structure $\lambda_{o}^{K}$ on $\mathcal{ML}^{k}\left(\Omega
\right)^{K}$.

The space $\mathcal{ML}^{m+k}_{\textbf{T}}\left(\Omega\right)$
will carry the initial uniform convergence structure
$\mathcal{J}_{\widehat{\textbf{T}}}$ with respect to the injective
mapping
\begin{eqnarray}
\widehat{\textbf{T}}:\mathcal{ML}^{m+k}_{\textbf{T}}\left(\Omega\right)
\rightarrow \mathcal{ML}^{k}\left(\Omega\right)^{K}.\nonumber
\end{eqnarray}
That is,
\begin{eqnarray}
\mathcal{U}\in\mathcal{J}_{\widehat{\textbf{T}}} \Leftrightarrow
\left(\widehat{\textbf{T}}\times
\widehat{\textbf{T}}\right)\left(\mathcal{U}\right) \in
\mathcal{J}_{\lambda_{o}^{K}}. \label{JTDef}
\end{eqnarray}
The following is now immediate.
\begin{proposition}
The mapping $\widehat{\textbf{T}}$ is a uniformly continuous
embedding of the uniform convergence space
$\mathcal{M}^{k}_{\textbf{T}}\left(\Omega\right)$ into the uniform
convergence space $\mathcal{ML}^{k}\left(\Omega\right)^{K}$.
\end{proposition}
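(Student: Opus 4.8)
The plan is to read the statement off directly from the definition (\ref{JTDef}) of the initial uniform convergence structure $\mathcal{J}_{\widehat{\textbf{T}}}$, together with the injectivity of $\widehat{\textbf{T}}$ established in the construction of the commutative diagram (\ref{TTHat}). Since $\mathcal{ML}^{m+k}_{\textbf{T}}(\Omega)$ carries the initial structure with respect to the single injective mapping $\widehat{\textbf{T}}$, verifying that $\widehat{\textbf{T}}$ is an embedding amounts to checking the two defining properties, both of which follow formally.

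First I would record the uniform continuity of $\widehat{\textbf{T}}$. This is immediate: if $\mathcal{U} \in \mathcal{J}_{\widehat{\textbf{T}}}$, then by (\ref{JTDef}) we have $(\widehat{\textbf{T}} \times \widehat{\textbf{T}})(\mathcal{U}) \in \mathcal{J}_{\lambda_o^K}$, which is exactly the condition for uniform continuity of $\widehat{\textbf{T}}$ into $\mathcal{ML}^{k}(\Omega)^{K}$.

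It remains to show that $\widehat{\textbf{T}}$ is an embedding, namely that its inverse, defined on the image endowed with the subspace structure, is uniformly continuous. Following the pattern of the proof of Lemma \ref{Initlemma}, I would take any filter $\mathcal{V} \in \mathcal{J}_{\lambda_o^K}$ on $\mathcal{ML}^{k}(\Omega)^{K} \times \mathcal{ML}^{k}(\Omega)^{K}$ possessing a trace on $\widehat{\textbf{T}}(\mathcal{ML}^{m+k}_{\textbf{T}}(\Omega)) \times \widehat{\textbf{T}}(\mathcal{ML}^{m+k}_{\textbf{T}}(\Omega))$, and verify that the pulled-back filter $(\widehat{\textbf{T}}^{-1} \times \widehat{\textbf{T}}^{-1})(\mathcal{V})$ lies in $\mathcal{J}_{\widehat{\textbf{T}}}$. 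By (\ref{JTDef}) this reduces to showing that $(\widehat{\textbf{T}} \times \widehat{\textbf{T}})((\widehat{\textbf{T}}^{-1} \times \widehat{\textbf{T}}^{-1})(\mathcal{V})) \in \mathcal{J}_{\lambda_o^K}$.

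The only place where the injectivity of $\widehat{\textbf{T}}$ enters is the observation that, since $\mathcal{V}$ is supported on the image and $\widehat{\textbf{T}}$ is a bijection onto that image, pushing forward the pull-back returns precisely the trace of $\mathcal{V}$ on the image, which is finer than $\mathcal{V}$ itself. Because $\mathcal{J}_{\lambda_o^K}$ is closed under passage to finer filters and contains $\mathcal{V}$, it also contains this pushed-forward filter, which closes the argument. I do not anticipate any genuine obstacle: the entire content is already encoded in the definition of the initial uniform convergence structure for a single injective mapping, which is why the result is stated as immediate; alternatively it is the one-mapping instance of Lemma \ref{Initlemma}.
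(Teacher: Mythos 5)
Your proposal is correct and matches the paper's intent exactly: the paper dismisses the proposition as immediate from the definition (\ref{JTDef}) of the initial structure with respect to the single injective mapping $\widehat{\textbf{T}}$, and your argument is precisely the spelled-out version of that claim, with uniform continuity read off from (\ref{JTDef}) and the inverse handled by noting that the push-forward of the pull-back of a filter supported on the image is finer than that filter, hence again in $\mathcal{J}_{\lambda_{o}^{K}}$.
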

As in the rest of the paper, we denote by
$\mathcal{ML}^{m+k}_{\textbf{T}}\left(\Omega\right)^{\sharp}$ the
uniform convergence space completion of
$\mathcal{ML}^{m+k}_{\textbf{T}}\left(\Omega\right)$. The
extension of the uniformly continuous embedding
\begin{eqnarray}
\widehat{\textbf{T}}:\mathcal{ML}^{m+k}_{\textbf{T}}\left(\Omega\right)
\rightarrow \mathcal{ML}^{k}\left(\Omega\right)^{K}\nonumber
\end{eqnarray}
is denoted $\widehat{\textbf{T}}^{\sharp}$.  The
\textit{generalized} equation, corresponding to (\ref{OCMNLPDE}),
now takes the form
\begin{eqnarray}
\widehat{\textbf{T}}^{\sharp}\textbf{U}^{\sharp}=\textbf{f}\label{ExtPDEII}
\end{eqnarray}
In view of the equivalence of the equations (\ref{TPDE}) and
(\ref{THatPDE}), a solution $U^{\sharp}$ of (\ref{ExtPDEII}) is
interpreted as \textit{generalized solution} of (\ref{OCMNLPDE}).

The existence of solutions of (\ref{ExtPDEII}) follows as an
application of the following basic approximation result
\cite{vdWalt2}.
\begin{theorem}\label{Approx}
Consider a system of nonlinear PDEs of the form (\ref{OCMNLPDE})
through (\ref{PDEOPDef}) that also satisfies (\ref{Ass}).  For
every $\epsilon>0$ there exists a closed nowhere dense set
$\Gamma_{\epsilon}\subset\Omega$, and a function
$\textbf{u}_{\epsilon}\in\mathcal{C}^{\infty}\left(\Omega\setminus\Gamma_{\epsilon}\right)^{K}$
such that
\begin{equation}\label{ApEq}
f_{j}\left(x\right)-\epsilon\leq
T_{j}\left(x,D\right)u_{\epsilon}\left(x\right)\leq
f_{j}\left(x\right)\mbox{, }x\in\Omega\setminus\Gamma_{\epsilon}
\end{equation}
for every $j=1,...,K$.
\end{theorem}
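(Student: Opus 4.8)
The plan is to establish the estimate (\ref{ApEq}) first locally, at each point of $\Omega$, and then to patch the resulting local solutions together across a suitable partition of $\Omega$ whose interface is closed and nowhere dense. The analytic content is concentrated entirely in the local step, and is driven by the interior assumption (\ref{Ass}); the global step is a purely topological gluing argument.

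First I would fix $\epsilon>0$ and work at an arbitrary point $a\in\Omega$. The assumption (\ref{Ass}) states that $\textbf{f}(a)$ lies in the interior of the range $\{\textbf{F}(a,\xi):\xi\in\mathbb{R}^{M}\}$. Hence, for a sufficiently small $\delta\in(0,\epsilon)$, the point $\textbf{f}(a)-\delta\textbf{1}$, with $\textbf{1}=(1,\dots,1)$, still belongs to this range, so there is some $\xi_{a}\in\mathbb{R}^{M}$ with $F_{j}(a,\xi_{a})=f_{j}(a)-\delta$ for every $j=1,\dots,K$ simultaneously. The components of $\xi_{a}$ prescribe the values at $a$ of the derivatives $D^{\alpha}u_{i}$, $|\alpha|\leq m$, $i=1,\dots,K$, of a candidate solution. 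I would therefore take $\textbf{u}_{a}$ to be the vector of Taylor polynomials whose derivatives at $a$ up to order $m$ agree with the prescribed entries of $\xi_{a}$; such a polynomial vector exists and is $\mathcal{C}^{\infty}$.

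The next step is to pass from the point $a$ to a neighborhood. Since $\textbf{F}$ and $\textbf{f}$ are $\mathcal{C}^{k}$-smooth and $\textbf{u}_{a}$ is smooth, each map $x\mapsto T_{j}(x,D)u_{a}(x)=F_{j}(x,\dots,D^{\alpha}u_{a}(x),\dots)$ is continuous and equals $f_{j}(a)-\delta$ at $x=a$. Consequently $T_{j}(x,D)u_{a}(x)-f_{j}(x)$ equals $-\delta$ at $a$, so by continuity it remains strictly negative, and bounded below by $-\epsilon$, on a neighborhood of $a$; intersecting the finitely many such neighborhoods over $j$ yields a single open $V_{a}\subseteq\Omega$ on which $f_{j}(x)-\epsilon\leq T_{j}(x,D)u_{a}(x)\leq f_{j}(x)$ holds for all $j$. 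The choice $\delta\in(0,\epsilon)$ is exactly what furnishes the slack needed for both one-sided inequalities to persist under perturbation in $x$. This is the local form of (\ref{ApEq}).

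Finally I would glue. The family $\{V_{a}:a\in\Omega\}$ is an open cover of the open set $\Omega\subseteq\mathbb{R}^{n}$, which is paracompact and second countable. Adaptively refining a dyadic cube decomposition of $\mathbb{R}^{n}$ until each half-open cube meeting $\Omega$ lies inside some $V_{a}$, I obtain a locally finite family of cubes $\{Q_{\nu}\}$ covering $\Omega$, each contained in a single $V_{a(\nu)}$. Setting $\textbf{u}_{\epsilon}:=\textbf{u}_{a(\nu)}$ on the interior of $Q_{\nu}$, and letting $\Gamma_{\epsilon}$ be the union of the cube boundaries intersected with $\Omega$, produces the required function: on $\Omega\setminus\Gamma_{\epsilon}$ it coincides locally with one of the polynomial vectors $\textbf{u}_{a(\nu)}$, hence lies in $\mathcal{C}^{\infty}(\Omega\setminus\Gamma_{\epsilon})^{K}$ and inherits (\ref{ApEq}). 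The main obstacle is precisely this last step: the decomposition must be refined finely enough to be subordinate to the cover while remaining locally finite, so that $\Gamma_{\epsilon}$ is genuinely closed in $\Omega$ and nowhere dense---being a locally finite union of pieces of hyperplanes---and so that no point of $\Omega\setminus\Gamma_{\epsilon}$ receives a conflicting definition. Verifying these topological properties of the interface, rather than the analytic estimate itself, is where the real work lies.
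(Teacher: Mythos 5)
The paper itself gives no proof of Theorem \ref{Approx}: it is quoted as a known result from \cite{vdWalt2}, which in turn rests on the original argument in \cite{Oberguggenberger}. Your proposal is correct and is essentially that standard proof --- using (\ref{Ass}) to solve $F_{j}(a,\xi_{a})=f_{j}(a)-\delta$ exactly at a point, realizing $\xi_{a}$ by a Taylor polynomial, invoking continuity to get the two-sided estimate on a neighborhood, and then gluing over a locally finite cube decomposition subordinate to the cover, with $\Gamma_{\epsilon}$ the union of the cube boundaries.
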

The main result of this section is the following.
\begin{theorem}\label{Exist}
Consider a system of nonlinear PDEs of the form (\ref{OCMNLPDE})
through (\ref{PDEOPDef}).  For every
$\textbf{f}\in\mathcal{C}^{k}\left(\Omega\right)^{K}$ that
satisfies (\ref{Ass}), there exists a unique
$\textbf{U}^{\sharp}\in\mathcal{ML}^{m+k}_{\textbf{T}}\left(\Omega\right)^{\sharp}$
such that
\begin{eqnarray}
\widehat{\textbf{T}}^{\sharp}\textbf{U}^{\sharp}=\textbf{f}.\nonumber
\end{eqnarray}
\end{theorem}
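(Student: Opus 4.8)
The plan is to treat uniqueness and existence separately, extracting uniqueness almost for free from the injectivity of $\widehat{\textbf{T}}^{\sharp}$ and concentrating the real work on existence, where Theorem \ref{Approx} supplies the approximating functions. First I would record that $\widehat{\textbf{T}}$ is a uniformly continuous embedding, so that Corollary \ref{EmbExt} applies and the extension $\widehat{\textbf{T}}^{\sharp}$ is injective. Since $\mathcal{ML}^{k}\left(\Omega\right)^{K}$ is already complete, the codomain of $\widehat{\textbf{T}}^{\sharp}$ is $\mathcal{ML}^{k}\left(\Omega\right)^{K}$ itself, and $\textbf{f}$ is one of its elements. Consequently any two solutions of $\widehat{\textbf{T}}^{\sharp}\textbf{U}^{\sharp}=\textbf{f}$ must coincide, which settles uniqueness as soon as existence is known.

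For existence I would apply Theorem \ref{Approx} with $\epsilon=1/n$ for each $n\in\mathbb{N}$, producing a closed nowhere dense set $\Gamma_{n}$ and a function $\textbf{u}_{n}\in\mathcal{C}^{\infty}\left(\Omega\setminus\Gamma_{n}\right)^{K}$ with
\begin{eqnarray}
f_{j}\left(x\right)-\frac{1}{n}\leq T_{j}\left(x,D\right)u_{n}\left(x\right)\leq f_{j}\left(x\right)\mbox{, }x\in\Omega\setminus\Gamma_{n}\mbox{, }j=1,\ldots,K.\nonumber
\end{eqnarray}
Applying the operator $I\circ S$ componentwise turns each $\textbf{u}_{n}$ into a nearly finite, normal lower semi-continuous function that is $\mathcal{C}^{m+k}$-smooth off the nowhere dense set $\Gamma_{n}$, hence an element of $\mathcal{ML}^{m+k}\left(\Omega\right)^{K}$, and I set $U_{n}=q_{\textbf{T}}\left(\textbf{u}_{n}\right)$. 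The decisive step is to verify that the images $\widehat{\textbf{T}}\left(U_{n}\right)=\textbf{T}\left(\textbf{u}_{n}\right)$ order converge to $\textbf{f}$. Because $\Omega\setminus\Gamma_{n}$ is dense and $\textbf{f}$ is continuous, the monotonicity of the Baire operators propagates the two-sided estimate above to all of $\Omega$, giving $f_{j}-1/n\leq T_{j}\left(\textbf{u}_{n}\right)\leq f_{j}$. Taking $\lambda_{n}=f_{j}-1/n$ and $\mu_{n}=f_{j}$ then furnishes exactly the squeezing sequences demanded by the order convergence structure $\lambda_{o}$ in (\ref{OCSDef}), so that $\textbf{T}\left(\textbf{u}_{n}\right)\rightarrow\textbf{f}$ in $\lambda_{o}^{K}$, and hence in the inducing uniform convergence structure $\mathcal{J}_{\lambda_{o}^{K}}$.

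It remains to pass from this convergence to a solution in the completion. A convergent filter is Cauchy, so the image under $\widehat{\textbf{T}}$ of the elementary filter $\mathcal{F}$ generated by $\left(U_{n}\right)$ is a Cauchy filter on $\mathcal{ML}^{k}\left(\Omega\right)^{K}$. Since $\mathcal{ML}^{m+k}_{\textbf{T}}\left(\Omega\right)$ carries the initial uniform convergence structure (\ref{JTDef}) with respect to $\widehat{\textbf{T}}$, a filter there is Cauchy precisely when its image under $\widehat{\textbf{T}}$ is Cauchy, so $\mathcal{F}$ is itself Cauchy. Completeness of $\mathcal{ML}^{m+k}_{\textbf{T}}\left(\Omega\right)^{\sharp}$ then yields a point $\textbf{U}^{\sharp}$ to which $\iota\left(\mathcal{F}\right)$ converges, and the uniform continuity of $\widehat{\textbf{T}}^{\sharp}$ forces $\widehat{\textbf{T}}^{\sharp}\textbf{U}^{\sharp}$ to be the limit of the $\widehat{\textbf{T}}\left(U_{n}\right)$, namely $\textbf{f}$.

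I expect the genuinely substantive point to be the order-convergence step, and in particular the careful handling of the Baire operators: one must check both that the smoothed approximants $I\circ S\left(\textbf{u}_{n}\right)$ genuinely land in $\mathcal{ML}^{m+k}\left(\Omega\right)^{K}$, and that passing to $I\circ S$ does not spoil the estimate (\ref{ApEq}) but rather extends it from the dense set $\Omega\setminus\Gamma_{n}$ to all of $\Omega$. By contrast, the completion machinery — the transfer of the Cauchy property through the initial structure, the use of completeness, and the continuity of $\widehat{\textbf{T}}^{\sharp}$ — is entirely routine given Corollary \ref{EmbExt} and the results of Sections 2 through 4, and the injectivity of $\widehat{\textbf{T}}^{\sharp}$ disposes of uniqueness at once.
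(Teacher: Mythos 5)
Your proposal is correct and follows essentially the same route as the paper's own proof: apply Theorem \ref{Approx} with $\epsilon=1/n$, smooth the approximants with $I\circ S$ to obtain elements of $\mathcal{ML}^{m+k}\left(\Omega\right)^{K}$, pass to the equivalence classes $\textbf{U}_{n}$, deduce that $\left(\widehat{\textbf{T}}\textbf{U}_{n}\right)$ order converges to $\textbf{f}$, conclude that $\left(\textbf{U}_{n}\right)$ is Cauchy in the initial structure and hence converges in the completion, and obtain uniqueness from the injectivity of $\widehat{\textbf{T}}^{\sharp}$ via Corollary \ref{EmbExt}. If anything, you fill in details (the explicit squeezing sequences $\lambda_{n}=f_{j}-1/n$, $\mu_{n}=f_{j}$, and the transfer of the Cauchy property through the initial structure (\ref{JTDef})) that the paper passes over with ``clearly.''
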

\begin{proof}
First let us show existence.  For every $n\in\textbf{N}$, Theorem
\ref{Approx} yields a closed nowhere dense set
$\Gamma_{n}\subset\Omega$ and a function
$\textbf{u}_{n}\in\mathcal{C}^{\infty}\left(\Omega\setminus\Gamma_{n}\right)^{K}$
that satisfies
\begin{equation}
x\in\Omega\setminus\Gamma_{n}\Rightarrow
f_{j}\left(x\right)-\frac{1}{n}\leq
T_{j}\left(x,D\right)\textbf{u}_{n}\left(x\right)\leq
f_{j}\left(x\right)\label{Approx2}
\end{equation}
for every $j=1,...,K$.  Since $\Gamma_{n}$ is closed nowhere dense
we associate $\textbf{u}_{n}$ with the function $\textbf{v}_{n}\in
\mathcal{ML}^{m+k} \left(\Omega\right)^{K}$, the components of
which are defined as
\begin{eqnarray}
v_{n,i} = \left(I\circ S\right)\left(u_{n,i}\right).\nonumber
\end{eqnarray}
Clearly, we now have, for each $n\in\mathbb{N}$ and $j=1,...,K$,
the inequalities
\begin{eqnarray}
f_{j}-\frac{1}{n}\leq T_{j}\textbf{v}_{n}\leq f_{j}\label{Approx3}
\end{eqnarray}
Consider now, for each $n\in\mathbb{N}$, the equivalence class
$\textbf{U}_{n}\in\mathcal{ML}^{m+k}_{\textbf{T}}\left(\Omega\right)$
associated
with the function $\textbf{v}_{n}$ through (\ref{TEq}).\\
Clearly, the sequence
$\left(\widehat{\textbf{T}}\textbf{U}_{n}\right)$ converges to
$\textbf{f}$ in $\mathcal{ML}^{k}\left(\Omega\right)^{K}$.  It now
follows that $\left(\textbf{U}_{n}\right)$ is a Cauchy sequence in
$\mathcal{ML}^{m+k}_{\textbf{T}}\left(\Omega\right)$ so that there
exists
$\textbf{U}^{\sharp}\in\mathcal{ML}^{m+k}_{_{\textbf{T}}}\left(\Omega\right)$
that satisfies (\ref{ExtPDEII}).\\
Since the mapping
$\widehat{\textbf{T}}:\mathcal{ML}^{m+k}_{\textbf{T}}\left(\Omega\right)
\rightarrow\mathcal{ML}^{k}\left(\Omega\right)^{K}$ is a uniformly
continuous embedding, the uniqueness of the solution
$\textbf{U}^{\sharp}$ found above now follows by Corollary
\ref{EmbExt}.
\end{proof}

Note that the \textit{uniqueness} of the generalized solution
$\textbf{U}^{\sharp}$ to (\ref{ExtPDEII}) should not be
misinterpreted as implying that any, possibly classical, solutions
are disregarded. In fact, quite the contrary.  Recall that the
completion of a uniform convergence space $X$ may be obtained
\textit{constructively}. In particular, it consists of all
equivalence classes of Cauchy filters on $X$ so that the members
of an equivalence class $[\mathcal{F}]$ all converge to the same
element of the completion $X^{\sharp}$.  That is, if we denote by
$X_{C}$ the set of Cauchy filters on $X$, then
\begin{eqnarray}
X^{\sharp}= X_{C}/\sim_{C}
\end{eqnarray}
where $\sim_{C}$ is the equivalence relation on $X_{C}$ defined as
\begin{eqnarray}
\mathcal{F}\sim_{C}\mathcal{G} \Leftrightarrow
\mathcal{F}\cap\mathcal{G}\in X_{C}.
\end{eqnarray}
In view of this, the \textit{unique} generalized solution is in
fact the \textit{totality of all approximate solutions in}
$\mathcal{ML}^{m+k}\left(\Omega\right)^{K}$.  In particular, every
classical solution $\textbf{u}\in\mathcal{C}^{m+k}\left(\Omega
\right)^{K}$ of (\ref{OCMNLPDE}), and every solution
$\textbf{u}\in\mathcal{ML}^{m+k}\left(\Omega\right)^{K}$ of
(\ref{TPDE}) generates a Cauchy filter on
$\mathcal{ML}^{m+k}_{\textbf{T}}\left(\Omega\right)$ which
converges to $\textbf{U}^{\sharp}$ in
$\mathcal{ML}^{m+k}_{\textbf{T}} \left(\Omega\right)^{\sharp}$. As
such, the unique generalized solution of (\ref{OCMNLPDE}) contains
also all of the mentioned usual solutions, should such solutions
exist.

Notice also that the mapping
 \begin{eqnarray}
\widehat{\textbf{T}}^{\sharp}:\mathcal{ML}^{m+k}_{\textbf{T}}\left(\Omega\right)^{\sharp}\rightarrow
\mathcal{ML}^{k}\left(\Omega\right)^{K}\nonumber
\end{eqnarray}
is \textit{injective}.  As such, we may consider the completion
$\mathcal{ML}^{m+k}_{\textbf{T}}\left(\Omega\right)^{\sharp}$ of
$\mathcal{ML}^{m+k}_{\textbf{T}}\left(\Omega\right)$ as a subset
of $\mathcal{ML}^{k}\left(\Omega\right)^{K}$, equipped with a
suitable uniform convergence structure.  Hence, as a bonus, we
also have a \textit{blanket regularity} in the sense that every
element $\textbf{U}^{\sharp}$ of
$\mathcal{ML}^{m+k}_{\textbf{T}}\left(\Omega\right) ^{\sharp}$ may
be assimilated with elements of $\mathcal{ML}^{k} \left(\Omega
\right)^{K}$.

The results on existence, uniqueness and regularity of generalized
solutions to (\ref{OCMNLPDE}) obtained in this section are, to a
certain extent, maximal with respect to the regularity of
solutions within the framework of the so called pullback spaces of
generalized functions considered here. In this regard, let us now
formulate the construction of generalized solution in an abstract
framework. Consider spaces $X$ and $Y$ of functions
$\textbf{g}:\Omega\rightarrow \mathbb{R}^{K}$ such that
$\textbf{f}\in Y$, and the nonlinear partial differential operator
$\textbf{T}$ associated with (\ref{OCMNLPDE}) acts as
\begin{eqnarray}
\textbf{T}:X\rightarrow Y.\label{T}
\end{eqnarray}
Also suppose that $Y$ is equipped with a complete and Hausdorff
uniform convergence structure $\mathcal{J}_{Y}$ which is first
countable. Proceeding in the same way as is done in this section,
we introduce an equivalence relation on $X$ through
\begin{eqnarray}
\textbf{u}\sim_{\textbf{T}}\textbf{v}\Leftrightarrow \textbf{Tu} =
\textbf{Tv},\nonumber
\end{eqnarray}
and associate with the mapping (\ref{T}) the injective mapping
\begin{eqnarray}
\widehat{\textbf{T}}_{X}:X_{\textbf{T}}\rightarrow Y,\label{THat}
\end{eqnarray}
where $X_{\textbf{T}}$ is the quotient space
$X/\sim_{\textbf{T}}$. In particular, the mapping (\ref{THat}) is
supposed to satisfy
\begin{eqnarray}
\begin{array}{ll}
\forall & \textbf{U}\in X_{\textbf{T}}\mbox{ :} \\
\forall & \textbf{u}\in\textbf{U}\mbox{ :} \\
& \textbf{Tu}=\widehat{\textbf{T}}_{X}\textbf{U}=\textbf{f} \\
\end{array}.\nonumber
\end{eqnarray}
If we equip $X_{\textbf{T}}$ with the initial uniform convergence
structure $\mathcal{J}_{\textbf{T}}$ with respect to the mapping
(\ref{THat}), then $\mathcal{J}_{\textbf{T}}$ is Hausdorff and
first countable.  In particular, the mapping (\ref{THat}) is a
uniformly continuous embedding, and extends uniquely to a
injective uniformly continuous mapping
\begin{eqnarray}
\widehat{\textbf{T}}^{\sharp}_{X}:X_{\textbf{T}}^{\sharp}\rightarrow
Y,\label{ExtTHat}
\end{eqnarray}
where $X_{\textbf{T}}^{\sharp}$ is the completion of
$X_{\textbf{T}}$.  A generalized solution of the systems of
nonlinear PDEs
\begin{eqnarray}
\textbf{Tu}=\textbf{f}\nonumber
\end{eqnarray}
in this context is any solution $\textbf{U}^{\sharp}\in
X_{\textbf{T} }^{\sharp}$ of the equation
\begin{eqnarray}
\widehat{\textbf{T}}^{\sharp}_{X}\textbf{U}^{\sharp}=\textbf{f}.\label{TEquation}
\end{eqnarray}
Note that, in view of the fact that the mapping (\ref{THat}) is a
uniformly continuous embedding, and (\ref{ExtTHat}) therefore an
injection, the equation (\ref{TEquation}) can have at most one
solution.

Now, in order to show the existence of a solution of
(\ref{TEquation}), we must construct a sequence $\left(
\textbf{u}_{n}\right)$ in $X$ so that $\left(\textbf{Tu}_{n}
\right)$ converges to $\textbf{f}$ in $Y$.  In this regard, the
most general such result is given by Theorem \ref{Approx}.  As
such, within such a general context as considered here, it follows
that, if the mapping (\ref{PDEDefMap}) is
$\mathcal{C}^{k}$-smooth, we have
\begin{eqnarray}
X\supseteq \mathcal{ML}^{m+k}\left(\Omega\right)^{K}.\label{XMLmk}
\end{eqnarray}
It now follows by (\ref{T}) and (\ref{XMLmk}) that
\begin{eqnarray}
Y\supseteq \mathcal{ML}^{k}\left(\Omega\right)^{K}.\label{YMLmk}
\end{eqnarray}
This may be summarized in the following commutative diagram.
\begin{eqnarray}
\begin{array}{c}
\setlength{\unitlength}{1cm} \thicklines
%\begin{pspicture}(13,6)
\begin{picture}(13,6)

\put(2.4,5.0){$\mathcal{ML}^{m+k}\left(\Omega\right)^{K}$}
\put(4.6,5.1){\vector(1,0){5.9}}
\put(10.6,5.0){$\mathcal{ML}^{k}\left(\Omega\right)^{K}$}
\put(6.8,5.5){$\textbf{T}$} \put(3.2,4.8){\vector(0,-1){3.5}}
\put(3.4,0.8){\vector(1,0){7.3}} \put(3.0,0.7){$X$}
\put(10.9,0.7){$Y$} \put(2.6,3.0){$\subset$}
\put(11.2,3.0){$\subset$} \put(11.0,4.8){\vector(0,-1){3.5}}
\put(6.8,1.0){$\textbf{T}$}

\end{picture} \\
%\end{pspicture}
\end{array}\label{TTD}
\end{eqnarray}
Combining the diagram (\ref{TTD}) with (\ref{TTHat}) and
\begin{eqnarray}
\setlength{\unitlength}{1cm} \thicklines
\begin{array}{l}
\begin{picture}(14,6.5)

\put(1.9,5.0){$X$} \put(2.3,5.1){\vector(1,0){7.7}}
\put(10.1,5.0){$Y$} \put(6.2,5.3){$\textbf{T}$}
\put(2.2,4.7){\vector(1,-1){3.5}} \put(6.5,1.2){\vector(1,1){3.5}}
\put(5.8,0.7){$X_{\textbf{T}}$} \put(3.5,2.6){$q_{\textbf{T}}$}
\put(8.7,2.6){$\widehat{\textbf{T}}_{X}$}

\end{picture}
\end{array}\label{XTD}
\end{eqnarray}
we obtain an injective mapping
\begin{eqnarray}
\iota_{\textbf{T}}:\mathcal{ML}^{m+k}_{\textbf{T}}\left(\Omega\right)
\rightarrow X_{\textbf{T}}\label{IotaT}
\end{eqnarray}
so that the diagram
\begin{eqnarray}
\setlength{\unitlength}{1cm} \thicklines
\begin{array}{l}
\begin{picture}(14,6.5)

\put(2.4,5.0){$\mathcal{ML}^{m+k}_{\textbf{T}}\left(\Omega\right)$}
\put(4.6,5.1){\vector(1,0){5.9}}
\put(10.6,5.0){$\mathcal{ML}^{k}\left(\Omega\right)^{K}$}
\put(6.8,5.5){$\widehat{\textbf{T}}$}
\put(3.2,4.7){\vector(0,-1){3.5}} \put(3.7,0.8){\vector(1,0){7.1}}
\put(3.0,0.7){$X_{\textbf{T}}$} \put(10.9,0.7){$Y$}
\put(2.6,3.0){$\iota_{\textbf{T}}$} \put(11.2,3.0){$\subset$}
\put(11.0,4.8){\vector(0,-1){3.5}}
\put(6.8,1.0){$\widehat{\textbf{T}}_{X}$}

\end{picture}
\end{array}\label{XTDiagram}
\end{eqnarray}
commutes.  In particular, if the subspace convergence structure
induced on $\mathcal{ML}^{m+k}_{\textbf{T},k}\left(\Omega\right)$
from $Y$ is coarser than the order convergence structure, then the
mapping (\ref{IotaT}) is uniformly continuous. Furthermore, in
this case the mapping (\ref{IotaT}) extends to an injective
uniformly continuous mapping
\begin{eqnarray}
\iota_{\textbf{T}}^{\sharp}:\mathcal{ML}^{m+k}_{\textbf{T}}\left(\Omega\right)^{\sharp}
\rightarrow X_{\textbf{T}}^{\sharp}\label{IotaTSharp}
\end{eqnarray}
so that the extended diagram
\begin{eqnarray}
\setlength{\unitlength}{1cm} \thicklines
\begin{array}{l}
\begin{picture}(14,6.5)

\put(2.4,5.0){$\mathcal{ML}^{m+k}_{\textbf{T}}\left(\Omega\right)^{\sharp}$}
\put(4.6,5.1){\vector(1,0){5.9}}
\put(10.6,5.0){$\mathcal{ML}^{k}\left(\Omega\right)^{K}$}
\put(6.8,5.5){$\widehat{\textbf{T}}^{\sharp}$}
\put(3.2,4.7){\vector(0,-1){3.5}} \put(3.7,0.8){\vector(1,0){7.1}}
\put(3.0,0.7){$X_{\textbf{T}}^{\sharp}$} \put(10.9,0.7){$Y$}
\put(2.6,3.0){$\iota_{\textbf{T}}^{\sharp}$}
\put(11.2,3.0){$\subset$} \put(11.0,4.8){\vector(0,-1){3.5}}
\put(6.8,1.0){$\widehat{\textbf{T}}^{\sharp}_{X}$}

\end{picture}
\end{array}\label{ExtXTDiagram}
\end{eqnarray}
The existence of the injective mapping (\ref{IotaTSharp}) may be
interpreted as follows.  Any pullback type space of generalized
functions $X_{\textbf{T}}^{\sharp}$ which is constructed as above,
and subject to the condition of \textit{generality} of the
nonlinear partial differential operator $\textbf{T}$ must contain
the space $\mathcal{ML}^{m+k}_{\textbf{T}}\left(
\Omega\right)^{\sharp}$. As such, within the context of
\textit{general}, continuous systems of nonlinear PDEs, the
generalized functions in $\mathcal{ML}^{m+k}_{\textbf{T}}\left(
\Omega \right)^{\sharp}$ may be considered to be `more regular'
than those in any other space of generalized functions constructed
in this way.

\section{Conclusion}

In this paper we have shown that initial uniform convergence
structures are, in general, not preserved by the Wyler completion.
In particular, we discuss the completion of subspaces and products
of uniform convergence spaces in some detail.  Nevertheless, some
insight into the structure of the completion of an initial uniform
convergence space is obtained.

As an application of these results, we obtain the existence of
generalized solutions of arbitrary $\mathcal{C}^{k}$-smooth
systems of nonlinear PDEs.  In addition, a blanket regularity
result is obtained, in the sense that every generalized solution
may be assimilated with functions which are
$\mathcal{C}^{k}$-smooth everywhere except on a closed nowhere
dense set.  These results are shown to be maximal, with respect to
regularity, within the setting of the so called pullback spaces of
generalized functions used here and in \cite{Oberguggenberger},
\cite{vdWalt6} and \cite{vdWalt2}.

\end{document}